\newtheorem{prop}{Proposition}[section]
\newtheorem{thm}[prop]{Theorem}
\newtheorem{defi}[prop]{Definition}
\newtheorem{cor}[prop]{Corollary}
\newtheorem{rem}[prop]{Remark}
\newtheorem{lem}[prop]{Lemma}
\newcommand{\N}{\mathbb{N}}
\newcommand{\Z}{\mathbb{Z}}
\numberwithin{equation}{section}
\begin{document}

\title{Graded Version of Some Basic Theorems on Local Cohomology to a Pair of Ideals}
\author{P. H. Lima\thanks{Work partially supported by CAPES-Brazil 10056/12-2.}\,\,\,\,and\,\,\,V. H. Jorge P\'erez
\thanks{Work partially supported by CNPq-Brazil - Grants 309316/2011-1,
and FAPESP Grant 2012/20304-1. 2000 Mathematics Subject Classification 13D45, 13A02, 13E10. {\it Key words}: Coefficient ideals, integral closure, reduction, multiplicity}}

\date{}
\maketitle

\begin{abstract}
In this paper, we prove some well-known results on local cohomology with respect to a pair of ideals in graded version, such as, Independence Theorem, Lichtenbaum-Harshorne Vanishing Theorem, Basic Finiteness and Vanishing Theorem, among others. Besides, we present a generalized version of Melkersson Theorem about Artinianess of modules and a result concerning Artinianess of local cohomology modules.
\end{abstract}

\section{Introduction}
Local cohomology with respect to a pair of ideals was firstly defined in \cite{TYY}, where the authors generalized the usual notion of local cohomology module and studied its various properties such as the relation between the usual local cohomology module, $H_{I}^{i}(M)$, and the one defined to a pair of ideals, $H_{I,J}^{i}(M)$, vanishing and nonvanishing theorems, the Generalized Version of Lichtenbaum-Hartshorne Theorem, among others.

Having the above results as motivation,
the aim of this paper is to present in graded version some basic theorems on cohomology with respect to a pair of ideals, such as, Independence Theorem, Lichtenbaum-Harshorne Vanishing Theorem, Basic Finiteness and Vanishing Theorems, and assertions concerning Artinianess and depth with respect to a pair of ideals.

The organization of this paper is as follows.

Let $R$ be a graded ring and let $I \subseteq R$ be
a graded ideal, $J$ an arbitrary ideal and $M$ a graded $R$-module. Let $R_+$ denote the irrelevant ideal of $R$, that is, the ideal generalized by elements of positive degree.

In section 2, we view the local cohomology $^{*}H^{i}_{I,J}(M)$ as a graded module and express it in terms of usual local cohomology modules. To be more precise, we denote by $^{*}\widetilde{W}(I,J)$ the set of homogeneous ideals $\mathfrak{c}$ of $R$ such that
$I^{n} \subseteq \mathfrak{c}+J$ for some integer $n$, and then, show that
$$
^{*}H_{I,J}^{i}(M)\cong {\underrightarrow{\lim}_{\mathfrak{c}\in ^{*}\widetilde{W}(I,J)} \hspace{0.01mm}} ^{*}H^{i}_{\mathfrak{c}}(M).
$$
We also present the graded version of the Independence Theorem and \\ Lichtenbaum-Harshorne Vanishing theorem for a pair of ideals.

In section 3, we suppose $R$ is a positively graded Noetherian ring which is standard, that is, $R=R_0[R_1]$, where $R_0$ is local ring.
In \cite[Theorem 2.1]{RS}, the authors show that if
$n={\rm sup} \{i \ :  H^{i}_{R_{+}}(M)\neq 0  \}$ then the $R$-module
$
H_{R_{+}}^{n}(M)/\mathfrak{m}_{0}H_{R_{+}}^{n}(M)
$ is Artinian. We generalize this result for the case of local cohomology with respect to a pair of homogeneous ideals, besides showing that
$
H_{R_{+}, J}^{i}(M)/(\mathfrak{m}_{0}R +J)H_{R_{+}, J}^{i}(M)
$ is actually Artinian for all $i\geq 0$.
Furthermore, we prove that
$$
\dim M/(\mathfrak{m}_{0}R+J)M = {\rm sup} \{i \ :  H^{i}_{R_{+},J}(M)\neq 0  \},
$$
a generalization for \cite[Lemma 3.4]{BH}.
We also present a new version, with respect to a pair of ideals, for Melkersson's Theorem about Artinianess.

In section 4, the module $M$ and the ring $R$ are assumed to be a Cohen-Macaulay, and then, it is obtained an expression to the number
$$
{\rm inf}\{i\in\N_0| H_{R_+,J}^i(M)\neq 0\}.
$$

In section 5, $R_0$ is assumed to be a local ring with infinite residual field.
It is well-known that, for all $i\geq 0$, $H_{R_+}^i(M)_{n}$ is finitely generated $R_{0}$-module for $n \in \mathbb{Z}$ and $H_{R_+}^i(M)_{n}=0,$ for $n$ sufficiently large.  We give a positive answer for this in the case of cohomology modules with respect to a pair of ideals. If $J$ is generated by elements of zero degree then, for $i \geq 1$, $H_{R_+,J}^i(M)_{n}=0,$ for $n$ sufficiently large and $H_{R_+,J}^i(M)_{n}$ is a finitely generated $R_{0}$-module for all $n\in \mathbb{Z}$. Finally, we prove a result about asymptotical stable (Theorem \ref{ass_stable}).

\section{Graded versions for a pair of ideals}

In this section, we introduce a grading to $H^{i}_{I,J}(M)$, making this a graded module. The result \cite[Theorem 3.2]{TYY}, Independence theorem for a pair of ideals and Generalized Version of Lichtenbaum-Hartshorne Theorem are again presented now from the point of view of graded modules.

(A) Let $R$ be a graded ring and let $I \subseteq R$ be
a graded ideal and $J$ an arbitrary ideal. If $M$ is a graded $R$-module then $\Gamma_{I,J}(M)$ is a graded
submodule of $M$. In fact, pick $m=(m_{k})\in \Gamma_{I,J}(M)$, so $I^{n}\subseteq (0:m)+J$.
It is easy to see that $I^{n}\subseteq (0:m_{k})+J$, for all $i$ and each $k$. Then define
$\Gamma_{I,J}(M)_{i}=\{m\in M_{i}: mI^{n} \subseteq mJ \text{ for some positive integer } n\geq 1 \}$.

(B) For a homomorphism $f:M \rightarrow N$, we have $f(\Gamma_{I,J}(M)) \subseteq \Gamma_{I,J}(N)$,
so that there is a mapping $\Gamma_{I,J}(f):\Gamma_{I,J}(M) \rightarrow \Gamma_{I,J}(N)$,
which is the restriction of $f$ to $\Gamma_{I,J}(M)$. Thus $\Gamma_{I,J}$ is an additive functor on the category
of all graded $R$-modules.

(C) Since the category of the graded modules has enough injectives, we can form the $i$-th right derived functor of $\Gamma_{I,J}$ (on the category of the graded modules), which will be denoted by $^{*}H^{i}_{I,J}$ ($i\geq 0$). For a graded $R$-module $M$, we shall refer to $^{*}H^{i}_{I,J}(M)$ to be the $i$-th graded local cohomology module
of $M$ with respect to the pair of ideals $(I,J)$.

(D) One can derive, by functor properties, that given an exact sequence $0 \rightarrow M \rightarrow N \rightarrow P \rightarrow 0$ of graded $R$-modules, there is a long exact sequence
$$\begin{array}{ccccccccc}
    0 & \rightarrow & ^{*}H^{0}_{I,J}(M) & \rightarrow & ^{*}H^{0}_{I,J}(N) & \rightarrow & ^{*}H^{0}_{I,J}(P) & \rightarrow &\\
     & \rightarrow & ^{*}H^{1}_{I,J}(M) & \rightarrow & ^{*}H^{1}_{I,J}(N) & \rightarrow & ^{*}H^{1}_{I,J}(P) & \rightarrow & \cdot \cdot \cdot ,\\
\end{array}
$$
of graded modules
with respect to a pair of ideals.

\begin{defi}
We denote by $^{*}\widetilde{W}(I,J)$ the set of homogeneous ideals $\mathfrak{c}$ of $R$ such that
$I^{n} \subseteq \mathfrak{c}+J$ for some integer $n$. We also define a partial order for this set:
$$
\mathfrak{c} \leq \mathfrak{d} \text{ if } \mathfrak{c} \supseteq \mathfrak{d}, \text{ for } \mathfrak{c},\mathfrak{d}\in \hspace{0.01mm} ^{*}\widetilde{W}(I,J).
$$
If $\mathfrak{a} \leq \mathfrak{b}$ we obtain the inclusion map $\Gamma_{\mathfrak{a}}(M) \hookrightarrow \Gamma_{\mathfrak{b}}(M)$. The order relation on $ ^{*}\widetilde{W}(I,J)$ and the inclusion maps turn
$\{\Gamma_{\mathfrak{a}}(M) \}_{\mathfrak{a}\in ^{*}\widetilde{W}(I,J)}$
into a direct system of graded $R$-modules.
\end{defi}

\begin{prop}\label{lim}
Let $R$ be a graded ring, $I$ a graded ideal, $J$ an arbitrary ideal of $R$ and $M$ a graded $R$-module. Then there is a natural graded isomorphism
$$
^{*}H_{I,J}^{i}(M)\cong {\underrightarrow{\lim}_{\mathfrak{c}\in ^{*}\widetilde{W}(I,J)} \hspace{0.01mm}} ^{*}H^{i}_{\mathfrak{c}}(M)
$$
\end{prop}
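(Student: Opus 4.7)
The plan is to reduce the claim, in standard fashion, to the $i=0$ case and then lift it to all $i\ge 0$ by means of a graded injective resolution together with exactness of filtered direct limits. So I would first prove a graded version of the basic identification
$$
\Gamma_{I,J}(M)\;\cong\;\varinjlim_{\mathfrak{c}\in{}^{*}\widetilde{W}(I,J)}\Gamma_{\mathfrak{c}}(M),
$$
natural in $M$, and then transfer it to the higher right derived functors.

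For the $i=0$ statement, for each $\mathfrak{c}\in{}^{*}\widetilde{W}(I,J)$ pick $n$ with $I^{n}\subseteq\mathfrak{c}+J$. If $m\in\Gamma_{\mathfrak{c}}(M)$ with $\mathfrak{c}^{k}m=0$, then $I^{nk}\subseteq(\mathfrak{c}+J)^{k}\subseteq\mathfrak{c}^{k}+J$, whence $I^{nk}m\subseteq Jm$. This gives an inclusion $\Gamma_{\mathfrak{c}}(M)\hookrightarrow\Gamma_{I,J}(M)$ of graded submodules, compatible with the transition maps of the direct system and natural in $M$; it induces a graded morphism $\varphi\colon\varinjlim\Gamma_{\mathfrak{c}}(M)\to\Gamma_{I,J}(M)$. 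Injectivity is immediate because the transition maps are inclusions. For surjectivity, use point (A) of the text: $\Gamma_{I,J}(M)$ is generated by its homogeneous elements, so it suffices to hit an arbitrary homogeneous $m\in\Gamma_{I,J}(M)$. From $I^{n}m\subseteq Jm$ one gets, for every $a\in I^{n}$, an element $b\in J$ with $am=bm$, hence $a-b\in\Ann_{R}(m)$ and $a\in\Ann_{R}(m)+J$. Since $m$ is homogeneous, $\mathfrak{c}:=\Ann_{R}(m)$ is a homogeneous ideal, so $\mathfrak{c}\in{}^{*}\widetilde{W}(I,J)$, and clearly $m\in\Gamma_{\mathfrak{c}}(M)$ maps to $m$ under $\varphi$.

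Before proceeding I would verify that $({}^{*}\widetilde{W}(I,J),\le)$ is directed: for $\mathfrak{c},\mathfrak{d}$ in the set, $\mathfrak{c}+\mathfrak{d}$ is homogeneous and satisfies $I^{n}\subseteq\mathfrak{c}+J\subseteq(\mathfrak{c}+\mathfrak{d})+J$, hence it is a common upper bound. Now take a graded injective resolution $0\to M\to E^{\bullet}$, whose existence is recalled in (C). Applying the natural isomorphism $\varphi$ termwise yields an isomorphism of cochain complexes $\Gamma_{I,J}(E^{\bullet})\cong\varinjlim_{\mathfrak{c}}\Gamma_{\mathfrak{c}}(E^{\bullet})$. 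Because filtered direct limits are exact in the category of graded $R$-modules, they commute with the cohomology functor, and therefore
$$
{}^{*}H^{i}_{I,J}(M)\;=\;H^{i}(\Gamma_{I,J}(E^{\bullet}))\;\cong\;\varinjlim_{\mathfrak{c}}H^{i}(\Gamma_{\mathfrak{c}}(E^{\bullet}))\;\cong\;\varinjlim_{\mathfrak{c}}{}^{*}H^{i}_{\mathfrak{c}}(M),
$$
the last isomorphism holding because $E^{\bullet}$ is simultaneously a graded injective resolution computing each ${}^{*}H^{i}_{\mathfrak{c}}$.

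The delicate point is the surjectivity of $\varphi$, which forces one to work with homogeneous elements so that $\Ann_{R}(m)$ is automatically graded and belongs to ${}^{*}\widetilde{W}(I,J)$; this is precisely where the grading must be respected, whereas the passage from level zero to higher levels is entirely formal and amounts to a graded transcription of the ungraded argument in \cite[Theorem 3.2]{TYY}.
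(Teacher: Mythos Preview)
Your argument for $i=0$ is essentially the paper's: both establish $\Gamma_{I,J}(M)=\bigcup_{\mathfrak{c}\in{}^{*}\widetilde{W}(I,J)}\Gamma_{\mathfrak{c}}(M)$ via the annihilator of a homogeneous element (the paper additionally treats a non-homogeneous $x=x_{1}+\cdots+x_{r}$ by taking $\mathfrak{c}=\Ann(x_{1})\cdots\Ann(x_{r})$, which is equivalent to your observation that hitting homogeneous elements suffices). For the passage to higher $i$ the paper takes a slightly different, though standard, route: rather than fixing one graded injective resolution and invoking exactness of filtered colimits, it verifies that $\{\varinjlim_{\mathfrak{c}}{}^{*}H^{i}_{\mathfrak{c}}\}_{i\ge 0}$ has long exact sequences and vanishes on ${}^{*}$injectives, and then appeals to the uniqueness of right derived functors \cite[Theorem 12.3.1]{BS}. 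Both arguments are valid and interchangeable.

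There is, however, a genuine slip in your directedness check. With the order $\mathfrak{c}\le\mathfrak{d}\Longleftrightarrow\mathfrak{c}\supseteq\mathfrak{d}$, an \emph{upper} bound of $\mathfrak{c}$ and $\mathfrak{d}$ must be \emph{contained} in both; your candidate $\mathfrak{c}+\mathfrak{d}$ contains both and is therefore a common \emph{lower} bound. The correct choice is $\mathfrak{c}\mathfrak{d}$ (or $\mathfrak{c}\cap\mathfrak{d}$): it is homogeneous, and from $I^{m}\subseteq\mathfrak{c}+J$ and $I^{n}\subseteq\mathfrak{d}+J$ one obtains $I^{m+n}\subseteq(\mathfrak{c}+J)(\mathfrak{d}+J)\subseteq\mathfrak{c}\mathfrak{d}+J$, so $\mathfrak{c}\mathfrak{d}\in{}^{*}\widetilde{W}(I,J)$ and $\mathfrak{c},\mathfrak{d}\le\mathfrak{c}\mathfrak{d}$. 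Since your appeal to exactness of filtered colimits (and indeed the identification of the limit with a union of submodules) requires the index set to be directed, this point must be repaired; with the corrected upper bound the remainder of your proof goes through unchanged.
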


\begin{proof}
Firstly we observe that
$\Gamma_{I,J}(M)=\bigcup_{\mathfrak{c} \in ^{*}\widetilde{W}(I,J)} \Gamma_{\mathfrak{c}}(M)$.
In fact, if $x \in \bigcup_{\mathfrak{c} \in ^{*}\widetilde{W}(I,J)} \Gamma_{\mathfrak{c}}(M)$, we have
$I^{m}\subseteq \mathfrak{c} + J$ and $xI^{n}=0$ for positive integers $n,m$ and some $\mathfrak{c}\in ^{*}\widetilde{W}(I,J)$. Since $I^{mn}\subseteq (\mathfrak{c}+J)^{n}\subseteq \mathfrak{c}^{n}+J$,
we have $I^{mn}x \subseteq Jx$, that is, $x\in \Gamma_{I,J}(M)$.
Now let $x \in \Gamma_{I,J}(M)$ homogeneous. Then $I^{n} \subseteq \mathfrak{c}+J$, where $\mathfrak{c}=\text{ann}(x)$, so $x\mathfrak{c}=0$ and $x\in \Gamma_{\mathfrak{c}}(M)$. If $x$ is not homogeneous, we write
$x=x_{1}+...+x_{r}$, where $x_{i}$ is homogeneous. We have then $\mathfrak{a}_{i}=\text{ann}(x_{i})$ for each $i$. Hence
$x\mathfrak{a}_{1}\cdot \cdot \cdot \mathfrak{a}_{r}=0$. Therefore $x\in \Gamma_{\mathfrak{c}}(M)$, where
$\mathfrak{c}=\mathfrak{a}_{1}\cdot \cdot \cdot \mathfrak{a}_{r}$.

Let $0 \rightarrow M \rightarrow N \rightarrow P \rightarrow 0$ be an exact sequence of $R$-modules. For each $\mathfrak{c}\in ^{*}\widetilde{W}(I,J)$, we have a long exact sequence
$$\begin{array}{ccccccccc}
    0 & \rightarrow & ^{*}H^{0}_{\mathfrak{c}}(M) & \rightarrow & ^{*}H^{0}_{\mathfrak{c}}(N) & \rightarrow & ^{*}H^{0}_{\mathfrak{c}}(P) & \rightarrow &\\
     & \rightarrow & ^{*}H^{1}_{\mathfrak{c}}(M) & \rightarrow & ^{*}H^{1}_{\mathfrak{c}}(N) & \rightarrow & ^{*}H^{1}_{\mathfrak{c}}(P) & \rightarrow & \cdot \cdot \cdot ,\\
\end{array}
$$
which we can take the limit to obtain a long exact sequence
$$
{\small
\begin{array}{llllllll}
    0 & \rightarrow & \underrightarrow{\lim}_{\mathfrak{c}\in ^{*}\widetilde{W}(I,J)} \ ^{*}H^{0}_{\mathfrak{c}}(M) & \rightarrow & \underrightarrow{\lim}_{\mathfrak{c}\in ^{*}\widetilde{W}(I,J)} \ ^{*}H^{0}_{\mathfrak{c}}(N) & \rightarrow & \underrightarrow{\lim}_{\mathfrak{c}\in ^{*}\widetilde{W}(I,J)} \ ^{*}H^{0}_{\mathfrak{c}}(P) & \\
     &  \rightarrow & \underrightarrow{\lim}_{\mathfrak{c}\in ^{*}\widetilde{W}(I,J)} \  ^{*}H^{1}_{\mathfrak{c}}(M) & \rightarrow & \underrightarrow{\lim}_{\mathfrak{c}\in  ^{*}\widetilde{W}(I,J)} \ ^{*}H^{1}_{\mathfrak{c}}(N) & \rightarrow & \cdots  .&  \\
\end{array}
}
$$

Since
$^{*}H^{i}_{\mathfrak{c}}(E)=0$ for any $^{*}$injective $R$-module $E$ and any positive integer $i$, $\underrightarrow{\lim}_{\mathfrak{c}\in ^{*}\widetilde{W}(I,J)}\ ^{*}H^{i}_{\mathfrak{c}}(E)=0$.
Therefore, one may conclude that
$$\{ \underrightarrow{\lim}_{\mathfrak{c}\in ^{*}\widetilde{W}(I,J)} \hspace{0.01mm} ^{*}H^{i}_{\mathfrak{c}} \ \ \ | \ i=0,1,2,...\}$$
\noindent is a system of right derived functors of $^{*}\Gamma_{I,J}$ (see \cite[Theorem 12.3.1]{BS}), and that there is the desired graded isomorphism.
\end{proof}

\begin{rem}\label{Remark2.3}
Consider the above setup. Since
$\Gamma_{I,J}(M)=\bigcup_{\mathfrak{c} \in ^{*}\widetilde{W}(I,J)} \Gamma_{\mathfrak{c}}(M)$, one may conclude similarly to the above proof that
$$
H_{I,J}^{i}(M)\cong \underrightarrow{\lim}_{\mathfrak{c}\in ^{*}\widetilde{W}(I,J)} \hspace{0.01mm} H^{i}_{\mathfrak{c}}(M).
$$
\end{rem}

The following result is obtained from the previous proposition once $^{*}H_{\mathfrak{c}}^{i}(E)=0$ for any graded ideal $\mathfrak{c}$ and an $^{*}$injective graded $R$-module $E$.

\begin{prop}\label{acyclic}
Let $R$ be a graded ring, I a graded ideal and $J$ an arbitrary ideal. Let $E$ be an $^{*}$injective graded $R$-module. Then
$H_{I,J}^{i}(E)=0$.
\end{prop}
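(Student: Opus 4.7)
The plan is to invoke Proposition~\ref{lim} in order to reduce the acyclicity statement at the pair $(I,J)$ to the acyclicity statement at a single graded ideal, and then to pass to the filtered colimit. The assertion in the preamble to the proposition, ${}^{*}H^{i}_{\mathfrak{c}}(E)=0$ for every graded ideal $\mathfrak{c}$ and every ${}^{*}$injective graded $R$-module $E$ (with $i\ge 1$), will do essentially all of the work.

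First I would apply Proposition~\ref{lim} with $M=E$ to obtain the natural graded isomorphism
$$
{}^{*}H^{i}_{I,J}(E)\;\cong\;\underrightarrow{\lim}_{\mathfrak{c}\in{}^{*}\widetilde{W}(I,J)}\,{}^{*}H^{i}_{\mathfrak{c}}(E),
$$
which reduces the vanishing statement to a pointwise vanishing in the directed system.

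Next I would argue the termwise vanishing. Fix $\mathfrak{c}\in{}^{*}\widetilde{W}(I,J)$; this is a homogeneous ideal of $R$. Because ${}^{*}H^{i}_{\mathfrak{c}}$ was constructed as the $i$-th right derived functor of ${}^{*}\Gamma_{\mathfrak{c}}$ on the category of graded $R$-modules, and $E$ is an injective object in precisely that category, we have ${}^{*}H^{i}_{\mathfrak{c}}(E)=0$ for all $i\ge 1$. (If one prefers, this is the standard fact that ${}^{*}\Gamma_{\mathfrak{c}}$ applied to a ${}^{*}$injective resolution of $E$ starts with $E$ itself and then has zeros, since $E$ is its own ${}^{*}$injective resolution in degree $0$.)

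Finally I would conclude by exactness of filtered colimits: the colimit of the zero system is zero, and the displayed isomorphism gives ${}^{*}H^{i}_{I,J}(E)=0$ for $i\ge 1$. There is no real obstacle; the entire content of the proposition is already packaged in Proposition~\ref{lim} together with the general vanishing of right derived functors on injective objects of the ambient category, and the argument amounts to composing these two facts.
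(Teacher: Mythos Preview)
Your proposal is correct and follows exactly the paper's approach: the paper's entire proof is the one-sentence remark preceding the proposition, namely that the result is obtained from Proposition~\ref{lim} together with the vanishing ${}^{*}H^{i}_{\mathfrak{c}}(E)=0$ for every graded ideal $\mathfrak{c}$ and every ${}^{*}$injective $E$. Your write-up simply unpacks that sentence.
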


By Proposition \ref{acyclic}, we have the following

\begin{prop}
Let $R$ be a graded ring, $I$ a graded ideal and $J$ an arbitrary ideal. Let $M$ be an $R$-graded module. There is an isomorphism
$$
^{*}H_{I,J}^{i}(M) \cong H_{I,J}^{i}(M),
$$
for all $i$ as underlying $R$-modules.
\end{prop}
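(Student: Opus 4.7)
The plan is to exploit Proposition \ref{acyclic} as an acyclicity statement: a $^{*}$injective resolution of $M$ in the graded category should serve simultaneously to compute $^{*}H^{i}_{I,J}(M)$ (by definition) and $H^{i}_{I,J}(M)$ (by the standard fact that right derived functors can be computed from any acyclic resolution). Since the functor $\Gamma_{I,J}$ acts identically whether we regard its argument as a graded module or as a mere $R$-module, the two cohomology modules will coincide on the nose as underlying $R$-modules.

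More concretely, I would proceed as follows. First, invoke item (C) in the preamble to choose a $^{*}$injective resolution $0 \to M \to E^{0} \to E^{1} \to \cdots$ in the category of graded $R$-modules; since forgetting the grading preserves exactness, this same complex is an exact resolution of $M$ in the ungraded category. Next, apply Proposition \ref{acyclic} to conclude that each $E^{j}$ satisfies $H^{i}_{I,J}(E^{j}) = 0$ for all $i \geq 1$, so $E^{\bullet}$ is a $\Gamma_{I,J}$-acyclic resolution of $M$ in the ungraded category as well. Then the standard homological algebra result on computing right derived functors from acyclic resolutions (as in \cite[Theorem 12.3.1]{BS}, already cited in the proof of Proposition \ref{lim}) yields
$$
H^{i}_{I,J}(M) \cong H^{i}(\Gamma_{I,J}(E^{\bullet})).
$$
On the other hand, by the very definition of $^{*}H^{i}_{I,J}$ in (C), the right-hand side is also $^{*}H^{i}_{I,J}(M)$. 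Comparing the two identifications gives the desired isomorphism of underlying $R$-modules.

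The main subtlety is conceptual rather than technical: one must be careful that the $^{*}$injective modules produced in the graded category need not be injective in the ungraded category, so the argument cannot simply identify the two derived functors through their universal property. The pivotal observation is that Proposition \ref{acyclic} replaces this missing injectivity with the weaker, but for our purposes sufficient, property of $\Gamma_{I,J}$-acyclicity in the ungraded sense. Once this is recognized, no calculation is required and the isomorphism follows formally.
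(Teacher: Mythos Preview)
Your proposal is correct and follows exactly the approach intended by the paper, which merely states that the result is a consequence of Proposition~\ref{acyclic} without writing out the details. You have supplied precisely the standard argument---using a $^{*}$injective resolution as a $\Gamma_{I,J}$-acyclic resolution in the ungraded category---that makes that one-line deduction rigorous.
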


\begin{thm}\emph{(Graded independence theorem for a pair of ideals)}

Assume $R=\bigoplus_{n\in \mathbb{Z}} R_{n}$ is a graded ring, $I \subset R$ a graded ideal and $J \subset R$ an arbitrary ideal. Let $R'=\bigoplus_{n\in \mathbb{Z}} R_{n}'$ be another Noetherian
graded ring, $f:R \rightarrow R'$ a graded homomorphism of rings such that
$f(I)=JR'$ and $M'$ an $R'$-module.
\begin{enumerate}
  \item For each $i\in \mathbb{N}_{0}$, both the cohomology modules $$H_{IR',JR'}^{i}(M') \ \text{  and  } \ H_{I,J}^{i}(M')$$ are graded $R$-modules;
  \item For each $i\in \mathbb{N}_{0}$, there exists a graded isomorphism $$H_{IR',JR'}^{i}(M') \cong H_{I,J}^{i}(M')$$
  of graded $R$-modules.
\end{enumerate}
\end{thm}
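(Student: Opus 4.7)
For part (1), my plan is direct: since $f:R\to R'$ is a graded ring homomorphism, $M'$ inherits a natural graded $R$-module structure via $f$, and $IR'$ is a graded ideal of $R'$. Items (A)--(C) at the start of Section~2 then apply in both contexts---to the pair $(I,J)$ acting on graded $R$-modules, and to the pair $(IR',JR')$ acting on graded $R'$-modules---equipping both $^{*}H^i_{I,J}(M')$ and $^{*}H^i_{IR',JR'}(M')$ with graded module structures. Invoking the preceding proposition (which identifies $^{*}H^i$ with $H^i$ as underlying modules) transfers these gradings to $H^i_{I,J}(M')$ and $H^i_{IR',JR'}(M')$, the latter being a graded $R'$-module and hence a graded $R$-module via $f$.

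For part (2), I plan to apply Proposition~\ref{lim} on both sides to obtain the graded direct-limit descriptions
$$
H^i_{I,J}(M')\;\cong\;\varinjlim_{\mathfrak{c}\in{}^{*}\widetilde{W}(I,J)} H^i_{\mathfrak{c}}(M'),
\qquad
H^i_{IR',JR'}(M')\;\cong\;\varinjlim_{\mathfrak{c}'\in{}^{*}\widetilde{W}(IR',JR')} H^i_{\mathfrak{c}'}(M'),
$$
the second taken as an isomorphism of graded $R$-modules via $f$. For each graded ideal $\mathfrak{c}$ of $R$, the classical graded Independence Theorem for a single ideal (a standard result) supplies a graded isomorphism $H^i_{\mathfrak{c}}(M')\cong H^i_{\mathfrak{c}R'}(M')$. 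Since $I^n\subseteq \mathfrak{c}+J$ manifestly forces $(IR')^n\subseteq \mathfrak{c}R'+JR'$, the rule $\mathfrak{c}\mapsto \mathfrak{c}R'$ is an order-preserving function from $^{*}\widetilde{W}(I,J)$ into $^{*}\widetilde{W}(IR',JR')$, and hence induces a canonical graded $R$-homomorphism between the two colimits.

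The hard part will be showing this induced map is an isomorphism; by the standard formalism of directed colimits this reduces to verifying that $\{\mathfrak{c}R':\mathfrak{c}\in{}^{*}\widetilde{W}(I,J)\}$ is cofinal in $^{*}\widetilde{W}(IR',JR')$. Given $\mathfrak{c}'\in{}^{*}\widetilde{W}(IR',JR')$, the natural candidate is $\mathfrak{c}:=f^{-1}(\mathfrak{c}')$, which is a graded ideal of $R$ (since $\mathfrak{c}'$ is graded and $f$ preserves degrees) and automatically satisfies $\mathfrak{c}R'\subseteq \mathfrak{c}'$. What remains is to check membership $\mathfrak{c}\in{}^{*}\widetilde{W}(I,J)$, i.e., that $I^k\subseteq f^{-1}(\mathfrak{c}')+J$ for some $k$. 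This is exactly where the hypothesis $f(I)=JR'$ plays its role, allowing the chain $f(I^n)\subseteq (IR')^n\subseteq \mathfrak{c}'+JR'$ to be pulled back through $f$ into a containment in $R$ written in terms of $J$ rather than $JR'$; once this is done, the cofinality is established and the induced map between colimits is the desired graded isomorphism.
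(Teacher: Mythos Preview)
For part~(1) your argument is essentially the paper's: the gradings come from items (A)--(C) together with the identification of $^{*}H$ with $H$.

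For part~(2) your route diverges from the paper's. The paper does not rebuild the isomorphism from the colimit description. Instead it (i) observes that the hypothesis forces $\Gamma_{IR',JR'}=\Gamma_{I,J}$ on graded $R'$-modules, (ii) invokes the ungraded independence theorem \cite[Theorem~2.7]{TYY} to obtain an $R$-module isomorphism $H^i_{IR',JR'}(M')\cong H^i_{I,J}(M')$, and then (iii) checks, via naturality and the acyclicity of $H^i_{IR',JR'}$ on $^*$injectives (Proposition~\ref{acyclic}), that the gradings supplied in part~(1) agree under this isomorphism. Thus the paper concentrates its effort on grading compatibility, importing the isomorphism itself as a black box; your colimit-plus-cofinality strategy is more self-contained but demands that you actually establish cofinality.

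That cofinality step has a genuine gap. You propose to deduce $I^{k}\subseteq f^{-1}(\mathfrak{c}')+J$ from the chain $f(I^{n})\subseteq (IR')^{n}\subseteq\mathfrak{c}'+JR'$ together with $f(I)=JR'$. If one carries this out, replacing $JR'$ by $f(I)$ and pulling back gives: for $a\in I^{n}$ write $f(a)=c'+f(b)$ with $c'\in\mathfrak{c}'$ and $b\in I$, whence $a-b\in f^{-1}(\mathfrak{c}')$ and $a\in f^{-1}(\mathfrak{c}')+I$. This yields only the trivial containment $I^{n}\subseteq f^{-1}(\mathfrak{c}')+I$, not the required $I^{n}\subseteq f^{-1}(\mathfrak{c}')+J$. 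The hypothesis $f(I)=JR'$ does give $J\subseteq I+\ker f$ (since $f(J)\subseteq JR'=f(I)$), but what you need is an inclusion running the other way, and nothing in your sketch supplies it. So as written the cofinality argument does not close, whereas the paper sidesteps the issue entirely by citing \cite[Theorem~2.7]{TYY}.
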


\begin{proof}
The first item follows because $f$ is homogeneous.

The assumption $f(I)=JR'$ gives $\Gamma_{IR',JR'}^{i}(M') = \Gamma_{I,J}^{i}(M')$.
By \cite[Theorem 2.7]{TYY}, for each graded $R$-module $M'$, one has an isomorphism
$$
H_{IR',JR'}^{i}(M') \cong H_{I,J}^{i}(M').
$$
By using the grading on $H_{I,J}^{i}(M')$ obtained from first item, we can turn this isomorphism into a graded isomorphism.
On the other hand, given an $R$-graded homomorphism $f:M'\rightarrow N'$, we have a natural commutative diagram
$$
\begin{array}{ccc}
  H_{I,J}^{i}(M')  & \longrightarrow & H_{I,J}^{i}(N') \\
  \downarrow &   & \downarrow \\
  H_{IR',JR'}^{i}(M') & \longrightarrow & H_{IR',JR'}^{i}(N').
\end{array}
$$
Hence, as $H_{IR',JR'}^{i}(E)$ equals zero (by Proposition \ref{acyclic}), these (new) gradings coincide with the ones from item $\emph{1}$. So, one can conclude the second item.
\end{proof}

\begin{lem}\label{completion}
Let $R$ be a graded local ring of unique graded maximal ideal $\mathfrak{m}$. Let $d$ be the dimension of $R$, $I$ a graded ideal and $J$ an arbitrary ideal of $R$. Then
$H^{d}_{I,J}(R)  \cong H^{d}_{I\widehat{R},J\widehat{R}}(\widehat{R})$.
\end{lem}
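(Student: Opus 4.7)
The plan is to present both sides as direct limits via Remark \ref{Remark2.3}, observe that top local cohomology is automatically $\mathfrak{m}$-torsion, and then invoke flat base change along the flat map $R\to \widehat{R}$.

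First I would invoke Remark \ref{Remark2.3} to write
$$
H^{d}_{I,J}(R)\;\cong\;\varinjlim_{\mathfrak{c}\in {}^{*}\widetilde{W}(I,J)} H^{d}_{\mathfrak{c}}(R).
$$
For each graded ideal $\mathfrak{c}$, the top local cohomology $H^{d}_{\mathfrak{c}}(R)$ is $\mathfrak{m}$-torsion: for any prime $\mathfrak{p}\neq \mathfrak{m}$ one has $\dim R_{\mathfrak{p}}<d$, so $H^{d}_{\mathfrak{c}}(R)_{\mathfrak{p}}\cong H^{d}_{\mathfrak{c}R_{\mathfrak{p}}}(R_{\mathfrak{p}})=0$ by Grothendieck's vanishing theorem. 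Since $\mathfrak{m}$-torsion is preserved under direct limits, $H^{d}_{I,J}(R)$ is itself $\mathfrak{m}$-torsion.

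Next, I would use the standard fact that for any $\mathfrak{m}$-torsion $R$-module $N$ the canonical morphism $N\to N\otimes_{R}\widehat{R}$ is an isomorphism. Indeed, each element of $N$ is annihilated by some $\mathfrak{m}^{k}$, and the ring isomorphism $R/\mathfrak{m}^{k}\cong \widehat{R}/\mathfrak{m}^{k}\widehat{R}$ forces $(R/\mathfrak{m}^{k})\otimes_{R}\widehat{R}\cong R/\mathfrak{m}^{k}$. Taking $N=H^{d}_{I,J}(R)$ yields
$$
H^{d}_{I,J}(R)\;\cong\;H^{d}_{I,J}(R)\otimes_{R}\widehat{R}.
$$

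Finally, since $\widehat{R}$ is flat over $R$, the flat base-change isomorphism for local cohomology with respect to a pair of ideals provides
$$
H^{d}_{I,J}(R)\otimes_{R}\widehat{R}\;\cong\;H^{d}_{I\widehat{R},J\widehat{R}}(\widehat{R}),
$$
and chaining the two displayed isomorphisms completes the argument.

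The main obstacle I foresee is justifying flat base change in the pair-of-ideals setting. I would reduce this to the classical flat base change for ordinary local cohomology by invoking Remark \ref{Remark2.3} on both sides and using that $(-)\otimes_{R}\widehat{R}$ commutes with direct limits, which reduces the assertion to the known isomorphism $H^{d}_{\mathfrak{c}}(R)\otimes_{R}\widehat{R}\cong H^{d}_{\mathfrak{c}\widehat{R}}(\widehat{R})$ for each graded ideal $\mathfrak{c}$, plus a short cofinality check between $\{\mathfrak{c}\widehat{R}:\mathfrak{c}\in {}^{*}\widetilde{W}(I,J)\}$ and $\widetilde{W}(I\widehat{R},J\widehat{R})$.
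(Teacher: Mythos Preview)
Your overall architecture matches the paper's proof exactly: establish that $H^{d}_{I,J}(R)$ behaves well under $-\otimes_R\widehat{R}$, then invoke flat base change. The paper does this by citing \cite[Theorem 2.1]{CW} to get that $H^{d}_{I,J}(R)$ is Artinian, then \cite[Lemma 4.8]{TYY} for the isomorphism $H^{d}_{I,J}(R)\cong H^{d}_{I,J}(R)\otimes_R\widehat{R}\cong H^{d}_{I,J}(\widehat{R})$, and finally \cite[Theorem 2.7]{TYY} for the independence step. You instead argue $\mathfrak{m}$-torsion directly and sketch the base-change and cofinality reductions yourself; this is the same route with the citations unpacked.

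One genuine imprecision: your claim that ``for any prime $\mathfrak{p}\neq\mathfrak{m}$ one has $\dim R_{\mathfrak{p}}<d$'' is false in a \emph{graded}-local ring, which need not be local. For instance in $R=k[x]$ with $\mathfrak{m}=(x)$ the non-graded maximal ideal $(x-1)$ has height $1=d$. What is true is that every \emph{graded} prime $\mathfrak{p}\neq\mathfrak{m}$ satisfies $\mathfrak{p}\subsetneq\mathfrak{m}$ and hence $\mathrm{ht}(\mathfrak{p})<d$; since $H^{d}_{\mathfrak{c}}(R)$ is a graded module, its homogeneous elements have graded annihilators whose minimal primes are graded, and this suffices to force $\mathfrak{m}$-torsion. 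With that correction your argument goes through. Alternatively, you could simply cite Artinianness of $H^{d}_{I,J}(R)$ as the paper does and avoid the localisation argument entirely.
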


\begin{proof}
By \cite[Theorem 2.1]{CW} (the same proof is true for the non-local case), $H^{d}_{I,J}(R)$ is Artinian.
So, by \cite[Lemma 4.8]{TYY}, we have
$$
H^{d}_{I,J}(R)\cong H^{d}_{I,J}(R)\otimes_{R} \widehat{R} \cong H^{d}_{I,J}(\widehat{R}).
$$
On the other hand, by \cite[Theorem 2.7]{TYY}, one has $H^{d}_{I,J}(\widehat{R})\cong H^{d}_{I\widehat{R},J\widehat{R}}(\widehat{R})$. The lemma is then concluded.
\end{proof}

\begin{thm} \emph{(Graded Lichtenbaum-Hartshorne Vanishing Theorem to a pair of ideals)}

Let $R=\oplus_{n\geq 0}R_{n}$ be a positively graded ring and an integral domain of dimension $d$. Assume $R_{0}$ is a complete local ring. Let $I$ be a proper graded ideal and $J$ an arbitrary ideal of $R$. Then $H^{d}_{I,J}(R)=0$.
\end{thm}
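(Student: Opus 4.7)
My plan is to reduce the statement to a vanishing over a complete local ring via Lemma \ref{completion}, and then invoke the ungraded Lichtenbaum--Hartshorne vanishing theorem for a pair of ideals proved in \cite{TYY}.

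Since $R_0$ is local and $R$ is positively graded, $R$ is a $^{*}$local graded ring with unique graded maximal ideal $\mathfrak m = \mathfrak m_0 R + R_+$. Lemma \ref{completion} applied to the graded ideal $I$ and the arbitrary ideal $J$ yields
$$
H^d_{I,J}(R)\;\cong\;H^d_{I\widehat R,\,J\widehat R}(\widehat R),
$$
where $\widehat R$ denotes the $\mathfrak m$-adic completion. Since completion preserves dimension, $\widehat R$ is a complete local ring of dimension $d$, and it suffices to prove $H^d_{I\widehat R,\,J\widehat R}(\widehat R)=0$.

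To conclude, I would apply the ungraded Lichtenbaum--Hartshorne vanishing theorem for a pair of ideals from \cite{TYY} to $\widehat R$. Since $R$ is an integral domain and $R_0$ is a complete local domain (it embeds in $R$), the associated primes of $\widehat R$ of maximal dimension are controlled by the graded structure of $R$; and since $I$ is proper graded (forcing $I\subseteq\mathfrak m$), the extension $I\widehat R$ remains proper in $\widehat R$. Together these give the hypotheses of the ungraded theorem and hence the vanishing. As an alternative route, one could bypass completion by first applying Proposition \ref{lim} to write
$$
H^d_{I,J}(R)\;\cong\;\varinjlim_{\mathfrak c\in {}^{*}\widetilde W(I,J)}H^d_{\mathfrak c}(R),
$$
and then killing each term by the classical (single-ideal) graded Lichtenbaum--Hartshorne theorem after passing to a cofinal sub-family of $\mathfrak c$'s with $\dim R/\mathfrak c\ge 1$.

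The main obstacle in either route is the same: while $R$ is a domain, its completion $\widehat R$ need not be, so the hypotheses of the ungraded pair-of-ideals theorem must be verified on $\widehat R$ by a direct analysis of $\mathrm{Assh}(\widehat R)$; in the limit approach, the analogous difficulty is exhibiting the required cofinal sub-family inside $^{*}\widetilde W(I,J)$. In both cases the domain hypothesis on $R$, together with properness of the graded ideal $I$ and the completeness of $R_0$, are precisely what make the reduction go through.
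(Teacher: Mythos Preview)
Your main approach via Lemma \ref{completion} followed by the ungraded Lichtenbaum--Hartshorne theorem for a pair of ideals from \cite{TYY} is exactly the route the paper takes. The paper also inserts the intermediate observation that $\widehat{R_{\mathfrak m}}\cong\widehat R$ and applies \cite[Theorem 4.9]{TYY} at the localization $R_{\mathfrak m}$, but this is cosmetic.

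The gap in your write-up is that you treat as an open ``obstacle'' the very fact the paper uses to close the argument. You say $\widehat R$ ``need not be'' a domain and propose to analyze $\mathrm{Assh}(\widehat R)$ directly; the paper instead asserts (and this is the whole point of the hypothesis that $R_0$ is complete) that $\widehat R$ \emph{is} a domain. This is a standard fact: for a positively graded Noetherian domain $R$ with $R_0$ complete local, the $\mathfrak m$-adic completion $\widehat R$ is again a domain. Once this is in hand, \cite[Theorem 4.9]{TYY} applies immediately: the only associated prime of maximal dimension is $(0)$, and since $J\neq 0$ (the case $J=0$ being the classical theorem) the vanishing criterion is satisfied. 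So you have the right skeleton, but you are missing the one-line fact that makes it a proof rather than a plan.

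Your alternative route through Proposition \ref{lim} and the single-ideal graded theorem is more delicate than you suggest. The difficulty is not merely ``exhibiting a cofinal sub-family'': there is no reason that a cofinal family of graded $\mathfrak c\in{}^{*}\widetilde W(I,J)$ with $\dim R/\mathfrak c\ge 1$ exists in general (consider $I$ close to $\mathfrak m$-primary), and for $\mathfrak m$-primary $\mathfrak c$ one has $H^d_{\mathfrak c}(R)\neq 0$. So the nonzero terms in the limit must genuinely cancel, and you would end up re-proving the pair-of-ideals theorem rather than reducing to the classical one. Stick with the completion approach and supply the missing domain fact.
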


\begin{proof}If $J=0$, it reduces to the usual case, already proved. Suppose then $J$ is a nonzero ideal.
Let $\mathfrak{m}$ be the unique graded maximal ideal of $R$. We know the completion $\widehat{R_{\mathfrak{m}}}$ is isomorphic to the $\mathfrak{m}$-adic completion $\widehat{R}$ of $R$. Further, since $R_{0}$ is complete, it is known that $\widehat{R}$ is a domain.
By the generalized Lichtenbaum-Harshorne Vanishing theorem (\cite[Theorem 4.9]{TYY}) one can deduce
$H^{d}_{IR_{\mathfrak{m}},JR_{\mathfrak{m}}}(R_{\mathfrak{m}})=0$. On the other hand, by Lemma \ref{completion}, one obtains
$$
H^{d}_{IR_{\mathfrak{m}},JR_{\mathfrak{m}}}(R_{\mathfrak{m}})\cong  H^{d}_{I,J}(R).
$$
Therefore $H^{d}_{I,J}(R)=0$.
\end{proof}

\begin{prop}\emph{(\cite[Corollary 4.2]{TYY})}\label{torsion}
Let $R$ be a graded local ring of unique graded maximal ideal $\mathfrak{m}$
and $M$ be a finite graded module over $R$. Let $I,J$ be graded ideals of $R$.
If $H_{I,J}^{i}(M)=0$ for all integers $i > 0$, then $M$ is an $(I,J)$-torsion $R$-module.
\end{prop}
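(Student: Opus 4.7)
The plan is to reduce the claim to its ungraded counterpart \cite[Corollary 4.2]{TYY}, using the propositions already proved in this section. The preceding result identifies $^{*}H^{i}_{I,J}(M)$ with $H^{i}_{I,J}(M)$ as underlying $R$-modules, so the hypothesis $H_{I,J}^{i}(M)=0$ for $i>0$ transfers unchanged. Since being an $(I,J)$-torsion module is defined by the equality $\Gamma_{I,J}(M)=M$, which is also a statement about the underlying $R$-module, the graded conclusion follows at once from the ungraded one. Thus the whole task is to explain why this forward reference to \cite[Corollary 4.2]{TYY} is legitimate in the graded setting.

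For a self-contained sketch, I would proceed as follows. Set $N=M/\Gamma_{I,J}(M)$ and consider the short exact sequence $0\to \Gamma_{I,J}(M)\to M\to N\to 0$ of graded modules. A routine check gives $\Gamma_{I,J}(N)=0$, while $\Gamma_{I,J}(M)$ is itself $(I,J)$-torsion; by embedding it in its $^{*}$injective hull, which is again $(I,J)$-torsion and hence absorbed into the functor $\Gamma_{I,J}$, one obtains $H_{I,J}^{i}(\Gamma_{I,J}(M))=0$ for all $i>0$. The long exact sequence together with the hypothesis then yields $H_{I,J}^{i}(N)=0$ for every $i\geq 0$, so it suffices to prove $N=0$.

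Assume for contradiction $N\neq 0$. The vanishing $\Gamma_{I,J}(N)=0$ combined with the identification $\mathrm{Ass}(\Gamma_{I,J}(N))=\mathrm{Ass}(N)\cap{}^{*}\widetilde{W}(I,J)$ (the natural graded analogue of \cite{TYY}) shows that no (necessarily homogeneous) associated prime of $N$ lies in ${}^{*}\widetilde{W}(I,J)$; in particular $I\not\subseteq\mathfrak{p}$ for any $\mathfrak{p}\in\mathrm{Ass}(N)$, so homogeneous prime avoidance supplies an $N$-regular homogeneous element $x\in I$. The long exact sequence attached to $0\to N\xrightarrow{x}N\to N/xN\to 0$ then gives $H_{I,J}^{i}(N/xN)=0$ for every $i\geq 0$ and $\Gamma_{I,J}(N/xN)=0$. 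Since $\dim(N/xN)=\dim N-1$, induction on $\dim N$ reduces to the base case $\dim N=0$, in which $\mathrm{Ass}(N)=\{\mathfrak{m}\}$ and $\mathfrak{m}\in{}^{*}\widetilde{W}(I,J)$ (because $I\subseteq\mathfrak{m}$), contradicting $\mathrm{Ass}(N)\cap{}^{*}\widetilde{W}(I,J)=\emptyset$.

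The main obstacle is the verification that $H_{I,J}^{i}$ annihilates $(I,J)$-torsion modules in positive degrees, which is what permits the transfer of the vanishing hypothesis from $M$ to $N$. One settles it either through a $^{*}$injective resolution argument, or, more economically, via the limit presentation of Proposition \ref{lim}, combined with the standard fact that $H_{\mathfrak{c}}^{i}$ vanishes on $\mathfrak{c}$-torsion modules for each $\mathfrak{c}\in{}^{*}\widetilde{W}(I,J)$.
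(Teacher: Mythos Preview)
Your argument is correct, and the opening reduction together with the self-contained sketch both work. The paper's own proof shares your starting point---set $N=M/\Gamma_{I,J}(M)$, note (via \cite[Corollary~1.13]{TYY}) that $\Gamma_{I,J}(N)=0$ and $H^{i}_{I,J}(N)=0$ for all $i>0$, and seek a contradiction from $N\neq 0$---but then finishes differently. Rather than your dimension induction with a regular element $x\in I$, the paper observes that $\mathfrak{m}\in W(I,J)$, so $\inf\{\mathrm{depth}\,N_{\mathfrak{p}}:\mathfrak{p}\in W(I,J)\}\le \mathrm{depth}\,N_{\mathfrak{m}}=\mathrm{depth}\,N<\infty$, and then invokes \cite[Theorem~4.1]{TYY} to force some $H^{i}_{I,J}(N)\neq 0$. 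Thus the paper's route is a one-line appeal to the $(I,J)$-depth formula, while yours is more self-contained and avoids that black box at the cost of the extra inductive machinery. Two small remarks on your sketch: first, the inductive hypothesis actually gives $N/xN=0$, and you then need graded Nakayama (using $x\in I\subseteq\mathfrak{m}$) to conclude $N=0$---this is implicit in ``reduces to the base case'' but deserves a word; second, your opening reduction to the ungraded \cite[Corollary~4.2]{TYY} presumes the underlying ring is local, which ``graded local'' alone does not guarantee, so it is your self-contained sketch (like the paper's argument) that really carries the graded statement.
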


\begin{proof}
Set $N=M/\Gamma_{I,J}(M)$. We need to prove that $N=0$. Suppose $N\neq 0$. By \cite[Corollary 1.13]{TYY}, we have $\Gamma_{I,J}(N)=0$ and $H_{I,J}(N)=H_{I,J}(M)=0$ for all $i>0$.
Since by hypothesis $I,J$ are graded ideals, we have $\mathfrak{m} \in W(I,J)$, so that
$$
\inf \{ \text{depth} \hspace{0.05cm} N_{\mathfrak{p}} \ | \ \mathfrak{p} \in W(I,J) \} \leq \text{depth} \hspace{0.05cm} N_{\mathfrak{m}}= \text{depth} \hspace{0.05cm} N< \infty.
$$
By using \cite[Theorem 4.1]{TYY}, we obtain $H_{I,J}(N)\neq 0$ for some integer $i \leq \text{depth} N$, which is a contradiction.
\end{proof}

\section{Top local cohomology}
In this section, we give a version for Melkersson's Theorem concerning Artinianess of module with respect to a pair of ideals (Proposition \ref{prop_melkerson}). Also it is obtained a result about Artinianess of local cohomology with respect to a pair of ideal (Theorem \ref{artianianess}). To conclude the section, we find the top of the local cohomology with respect to a pair of ideals (Theorem \ref{Theorem5}).

Throughout this section, let $R= \oplus_{d\geq 0}R_{d}$ denote a positively graded commutative Noetherian ring, which is standard, that is, $R=R_{0}[R_{1}]$.
Assume $R_{0}$ is a local ring of maximal ideal $\mathfrak{m}_{0}$. Set $R_{+}=\oplus_{i>0} R_{i}$, the irrelevant ideal of $R$. Let $M=\oplus_{d\in \mathbb{Z}}M_{d}$ be a finite graded $R$-module. Let $M[a]$ denote the graded module $a$-shift of $M$, defined by $M[a]_{i}=M_{i+a}$.

\begin{rem}
It can be derived from Proposition \ref{lim} that $H_{I,J}^{i}(M[a])\cong  H_{I,J}^{i}(M)[a]$ as homogeneous modules.
\end{rem}

\begin{prop}\emph{(\cite[Theorem 1.3]{M})}\label{prop_melkerson}
Let $M$ be an $(I,J)$-torsion $R$-module for which $(JM:_{M}I)$ is an Artinian module. Then
$M$ is an Artinian module.
\end{prop}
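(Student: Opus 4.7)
My approach is to reduce the assertion to the classical Melkersson theorem for $I$-torsion modules by passing to the quotient $M/JM$. The starting observation is the trivial inclusion $JM \subseteq (JM:_{M}I)$, which immediately makes $JM$ Artinian, being a submodule of the Artinian module $(JM:_{M}I)$.

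Next I would set $\bar M := M/JM$ and verify that it meets the hypotheses of the classical Melkersson theorem for the single ideal $I$. The $(I,J)$-torsion hypothesis says that for every $m \in M$ there exists $n$ with $I^{n}m \subseteq Jm \subseteq JM$, so reducing modulo $JM$ shows $\bar M$ is $I$-torsion. For the socle-type condition, the canonical surjection $(JM:_{M}I) \twoheadrightarrow (JM:_{M}I)/JM$ identifies $(0:_{\bar M}I)$ with a quotient of the Artinian module $(JM:_{M}I)$, so $(0:_{\bar M}I)$ is Artinian. Classical Melkersson then yields that $\bar M = M/JM$ is Artinian.

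Combining the Artinianness of $JM$ and of $M/JM$ through the short exact sequence
$$
0 \longrightarrow JM \longrightarrow M \longrightarrow M/JM \longrightarrow 0
$$
forces $M$ itself to be Artinian, completing the argument.

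The only step worth pausing on is conceptual rather than technical: one must recognize that $M/JM$, not $M$ directly, is the correct object on which to invoke the classical Melkersson theorem. Once this is noticed, both its $I$-torsion property and the Artinianness of the socle-type module $(0:_{\bar M}I)$ fall out of the hypotheses at once, and no delicate induction or auxiliary construction is required.
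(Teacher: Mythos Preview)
Your argument is correct and follows essentially the same route as the paper: pass to $\bar M=M/JM$, verify it is $I$-torsion with Artinian $(0:_{\bar M}I)\cong (JM:_{M}I)/JM$, apply classical Melkersson to get $\bar M$ Artinian, note $JM\subseteq (JM:_{M}I)$ is Artinian, and conclude via the short exact sequence. The only difference is cosmetic: the paper cites \cite[Corollary 1.9]{TYY} for the $I$-torsion property of $M/JM$, whereas you verify it directly.
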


\begin{proof}
If $M$ is an $(I,J)$-torsion $R$-module, then $M/JM$ is an $I$-torsion module by \cite[Corollary 1.9]{TYY}.
Since by assumption $(JM:_{M}I)=(0:_{M/JM}I)$ is an Artinian module we can use a result due to Melkersson (see \cite[Theorem 1.3]{M}) to get
$M/JM$ is an Artinian module. Moreover, $JM \subseteq (JM:_{M}I)$ is also an Artinian module. By an exact sequence one can conclude $M$ is an Artinian module.
\end{proof}

Next theorem is a generalization of Theorem 2.1 in \cite{RS}.

\begin{thm}\label{artianianess}
Let $J$ be a graded ideal of $R$.
Then the $R$-module
$$
H_{R_{+}, J}^{i}(M)/(\mathfrak{m}_{0}R +J)H_{R_{+}, J}^{i}(M)
$$ is Artinian for all $i\geq 0$.
\end{thm}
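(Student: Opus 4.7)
Set $N := H^i_{R_+,J}(M)$ and $K := \mathfrak{m}_0 R + J$; the goal is Artinianess of $N/KN$. The plan is to transfer the problem to the quotient ring $\bar R := R/K$ via the long exact sequence of local cohomology, the graded Independence Theorem of Section~2, and the pair version of Melkersson's theorem (Proposition~\ref{prop_melkerson}).

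First, apply $H^{\bullet}_{R_+,J}$ to the short exact sequence $0 \to KM \to M \to M/KM \to 0$ to obtain a long exact sequence, and let $\psi\colon N \to H^i_{R_+,J}(M/KM)$ be the induced map. Since $K$ annihilates $M/KM$, the target is annihilated by $K$; by $R$-linearity of $\psi$ this gives $K\cdot N \subseteq \ker(\psi)$, so $\psi$ descends to a canonical graded surjection $\bar\psi\colon N/KN \to \operatorname{im}(\psi)\subseteq H^i_{R_+,J}(M/KM)$. Using $K\supseteq J$, the graded Independence Theorem applied to the surjection $R\to \bar R$ identifies $H^i_{R_+,J}(M/KM)\cong H^i_{\bar R_+}(M/KM)$. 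Here $\bar R$ is a standard graded Noetherian ring whose degree-zero part $\bar R_0 = R_0/(\mathfrak{m}_0 + J_0)$ is the residue field of $R_0$, and $M/KM$ is finitely generated over $\bar R$; classical graded local cohomology theory over such a ring then ensures that $H^i_{\bar R_+}(M/KM)$ is Artinian as an $R$-module.

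To upgrade Artinianess of $\operatorname{im}(\psi)$ to that of $N/KN$, I would invoke Proposition~\ref{prop_melkerson}. The module $N$ is $(R_+,J)$-torsion and this property is inherited by $N/KN$; moreover $J$ annihilates $N/KN$, so $\bigl(J(N/KN):_{N/KN}R_+\bigr) = \bigl(0:_{N/KN}R_+\bigr)$. By Proposition~\ref{prop_melkerson}, it then suffices to prove that the $R_+$-annihilator $(0:_{N/KN}R_+)$ is Artinian. Since this annihilator is killed by $R_+ + K$, i.e., by the graded maximal ideal $\mathfrak{m}$ of $R$, it is an $R_0/\mathfrak{m}_0$-vector space, so Artinianess reduces to finite-dimensionality.

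The hard part will be establishing this finite-dimensionality. The natural approach is to embed $(0:_{N/KN}R_+)$ into the Artinian module $H^i_{\bar R_+}(M/KM)$ via $\bar\psi$; the contribution from $\ker(\bar\psi)=\operatorname{im}(\phi)/KN$, where $\phi\colon H^i_{R_+,J}(KM)\to N$, must be controlled separately---presumably by moving one step further in the long exact sequence to relate $\operatorname{im}(\phi)$ to terms coming from $H^{i-1}_{R_+,J}(M/KM)$ and verifying that no new $R_+$-socle classes appear. This socle-chasing is the technical heart of the argument, and everything else in the proof is formal.
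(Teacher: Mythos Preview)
Your outline has a genuine gap at exactly the point you flag as ``the technical heart'': you never show that $(0:_{N/KN}R_+)$ is Artinian, and the route you sketch does not get there. The kernel of $\bar\psi$ is $(\operatorname{im}\phi + KN)/KN$, a quotient of $H^i_{R_+,J}(KM)$; but the long exact sequence relates $H^i_{R_+,J}(KM)$ to $H^{i-1}_{R_+,J}(M/KM)$ only through the \emph{kernel} of $\phi$ (the image of the connecting map), not through $\operatorname{im}\phi$. So ``moving one step further'' gives no control over $\ker\bar\psi$. Nor does any obvious induction help: the module $KM$ sits at the same cohomological degree $i$ and typically has the same dimension as $M$, so neither induction on $i$ nor on $\dim M$ applies to $H^i_{R_+,J}(KM)$. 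The socle-chasing you defer is therefore not a routine bookkeeping step but the entire difficulty, and your short exact sequence does not supply the needed handle.

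The paper's argument avoids this by a different short exact sequence and a different ideal in Melkersson's criterion. After reducing (via \cite[Corollary 1.13]{TYY}) to the case where $M$ is $(R_+,J)$-torsion free, one chooses a homogeneous $M$-regular element $x\in R_+$ of degree $a$ and uses $0\to M \xrightarrow{x} M[-a]\to (M/xM)[-a]\to 0$. The induced long exact sequence shows that the kernel of multiplication by $x$ on $N/KN$ is a quotient of $H^{i-1}_{R_+,J}(M/xM)/K\,H^{i-1}_{R_+,J}(M/xM)$, which is Artinian by induction on $i$. One then applies Proposition~\ref{prop_melkerson} with the pair $((x),J)$ rather than $(R_+,J)$: since $N/KN$ is $((x),J)$-torsion and $(J(N/KN):_{N/KN}(x))=(0:_{N/KN}x)$ has just been shown Artinian, the conclusion follows. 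The decisive move is replacing the full $R_+$-socle by the kernel of a single element, which is precisely what the multiplication-by-$x$ sequence controls.
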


\begin{proof}
Since $M$ is a finite $R$-module one may verify $R_{+}^{k} \hspace{0,05cm} \Gamma_{R_{+}, \mathfrak{b}}(M) \subset J\Gamma_{R_{+}, J}(M)$ for some integer $k$. It implies $(\mathfrak{m}_{0} + R_{+})^{k} \Gamma_{R_{+}, J}(M) \subset \mathfrak{m}_{0}\Gamma_{R_{+}, J}(M)+JH_{R_{+}, J}^{n}(M),$ so that one may conclude
$$
\Gamma_{R_{+}, J}(M)/(\mathfrak{m}_{0}R+J)\Gamma_{R_{+}, J}(M)
$$
is Artinian.

Now assume, by induction, that $i>0$ and we have shown
$$
H_{R_{+}, J}^{i-1}(M')/(\mathfrak{m}_{0}R +J)H_{R_{+}, J}^{i-1}(M')
$$
is an Artinian module for any finite graded $R$-module $M'$.
In view of \cite[Corollary 1.13]{TYY}, we can assume $M$ is an $(R_{+}, J)$-torsion free $R$-module.
Since $\Gamma_{R_{+}}(M) \subseteq \Gamma_{R_{+}, J}(M)$, there exists a homogeneous element $x\in R_{+}$ which is $M$-regular. Say, $\text{deg}(x)=a$.
The exact sequence
$$
0 \rightarrow M \stackrel{x}{\rightarrow} M[-a] \rightarrow (M/xM)[-a] \rightarrow 0
$$
induces an exact sequence

$$
\begin{array}{r}
 \cdot \cdot \cdot \rightarrow H_{R_{+},J}^{i-1}(M/xM)\\
 \rightarrow H_{R_{+},J}^{i}(M) \stackrel{x}{\rightarrow} H_{R_{+},J}^{i}(M)[-a] \rightarrow H_{R_{+},J}^{i}(M/xM)[-a] \rightarrow \cdot \cdot \cdot
\end{array}
$$

\noindent Hence, we have the exact sequence
$$
\frac{H_{R_{+},J}^{i-1}(M/xM)}{(\mathfrak{m}_{0}R+J)H_{R_{+}, J}^{i-1}(M/xM)} \rightarrow \frac{H_{R_{+},J}^{i}(M)}{(\mathfrak{m}_{0}R+J)H_{R_{+}, J}^{i}(M)} \stackrel{x}{\rightarrow}
$$
$$
\hspace{-6cm} \rightarrow \frac{xH_{R_{+},J}^{i}(M)[-a]}{x(\mathfrak{m}_{0}R+J)H_{R_{+}, J}^{i}(M))[-a]}   \rightarrow 0.
$$
By induction hypothesis, we have
$
\frac{H_{R_{+},J}^{i-1}(M/xM)}{(\mathfrak{m}_{0}R +J)H_{R_{+}, J}^{i-1}(M/xM)}
$
is Artinian. It shows the kernel of the multiplication by $x$ on $\frac{H_{R_{+},J}^{i}(M)}{(\mathfrak{m}_{0}R +J)H_{R_{+}, J}^{i}(M)}$ is an Artinian $R$-module.
By using \cite[Corollary 1.13]{TYY}, one can deduce
$\frac{H_{R_{+},J}^{i}(M)}{(\mathfrak{m}_{0}R +J)H_{R_{+}, J}^{i}(M)}$ is an $((x),J)$-torsion $R$-module. To conclude the proof, we apply Proposition \ref{prop_melkerson} in order to obtain $\frac{H_{R_{+},J}^{i}(M)}{ (\mathfrak{m}_{0}R+J)H_{R_{+}, J}^{i}(M)}$ is Artinian. This completes the inductive step.

\end{proof}

\begin{thm}\label{anulamento}
Let $J$ be a graded ideal of $R$.
If $\dim M/(\mathfrak{m}_{0}R+J)M=d,$ then
$$
H_{R_{+}, J}^{i}(M)=0,
$$
for all $i> d$.
\end{thm}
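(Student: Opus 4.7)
The plan is to proceed by induction on $d = \dim M/(\mathfrak{m}_0 R + J)M$, mimicking the classical proof of \cite[Lemma~3.4]{BH}. For the base case $d=0$, note that $(M/JM)/\mathfrak{m}_0(M/JM) \cong M/(\mathfrak{m}_0 R + J)M$ has Krull dimension zero, so the standard graded-Nakayama argument applied to $M/JM$ gives $R_+^N(M/JM) = 0$ for some $N$, equivalently $R_+^N M \subseteq JM$. Hence $M$ is $(R_+,J)$-torsion; the Graded Independence Theorem applied to $M/JM$ (viewed as an $R/J$-module, where the pair degenerates to a classical torsion situation) together with the long exact sequence arising from $0\to JM\to M\to M/JM\to 0$ then yields $H^i_{R_+,J}(M) = 0$ for every $i\geq 1$.

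For the inductive step ($d\geq 1$), I first apply \cite[Corollary~1.13]{TYY} to the exact sequence $0\to \Gamma_{R_+,J}(M)\to M\to M/\Gamma_{R_+,J}(M)\to 0$ to reduce to the case $\Gamma_{R_+,J}(M)=0$. Since $\Gamma_{R_+}(M)\subseteq \Gamma_{R_+,J}(M) = 0$, a graded prime-avoidance argument (after replacing $R_0$ by $R_0[t]_{\mathfrak{m}_0 R_0[t]}$ if its residue field is finite) produces a homogeneous $x\in R_+$ of degree $a$ that is $M$-regular and whose image in $R/(\mathfrak{m}_0 R + J)$ is a parameter for $M/(\mathfrak{m}_0 R + J)M$, so that $\dim (M/xM)/(\mathfrak{m}_0 R + J)(M/xM) = d-1$. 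Feeding the short exact sequence $0\to M\xrightarrow{x} M[-a]\to (M/xM)[-a]\to 0$ into the long exact sequence of ${}^{*}H^{\bullet}_{R_+,J}$ and invoking the inductive hypothesis on $M/xM$ (which gives the vanishing of $H^{i-1}_{R_+,J}(M/xM)$ and $H^{i}_{R_+,J}(M/xM)$ for $i>d$), one obtains an isomorphism $x\cdot\colon H^i_{R_+,J}(M)\xrightarrow{\sim} H^i_{R_+,J}(M)[-a]$ for every $i>d$.

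Set $H := H^i_{R_+,J}(M)$ for some $i>d$. By Theorem~\ref{artianianess}, $A := H/(\mathfrak{m}_0 R + J)H$ is Artinian, and because graded Artinian modules over a positively graded Noetherian ring have only finitely many nonzero graded components, $A$ is bounded above in degree. The isomorphism descends to $\bar{x}\colon A\xrightarrow{\sim} A[-a]$ and strictly raises degree by $a>0$, which combined with the degree bound forces $A=0$, i.e.\ $H = (\mathfrak{m}_0 R + J)H$. The main obstacle, which I expect to be the delicate final step, is passing from this identity and the isomorphism $x\colon H\xrightarrow{\sim}H[-a]$ to the conclusion $H=0$. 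Since $H$ is not a priori finitely generated, Nakayama is unavailable; the plan is to exploit the $(R_+,J)$-torsion of $H$ together with Proposition~\ref{prop_melkerson} to promote the Artinianess of $A$ to Artinianess of $H$ itself, and then rerun the degree-boundedness argument (now applied to $H$, using that graded Artinian $R$-modules concentrate in finitely many degrees while the iso $x$ raises degree by $a>0$) to conclude $H=0$.
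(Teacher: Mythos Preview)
Your approach diverges from the paper's in an essential way: you induct on $d=\dim M/(\mathfrak m_0R+J)M$ and choose the regular element $x$ in $R_+$, whereas the paper inducts on $\dim_R M$, reduces to the $J$-torsion-free case via the sequence $0\to\Gamma_J(M)\to M\to M/\Gamma_J(M)\to 0$ together with the classical \cite[Lemma~3.4]{BH} applied to $H^i_{R_+}(\Gamma_J(M))$, and then chooses the regular element $a$ inside $J$. That single change of induction variable is what produces both of the obstacles you run into.

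First, your base case $d=0$ does not close. From $R_+^{N}M\subseteq JM$ you correctly deduce that $M$ is $(R_+,J)$-torsion, but for local cohomology with respect to a \emph{pair} of ideals this does \emph{not} by itself give $H^i_{R_+,J}(M)=0$ for $i>0$; there is no analogue of ``finitely generated $I$-torsion modules are $\Gamma_I$-acyclic'' in \cite{TYY}, and Proposition~\ref{torsion} is only the converse implication. Your long exact sequence coming from $0\to JM\to M\to M/JM\to 0$ merely shifts the problem to $JM$, which still satisfies $\dim JM/(\mathfrak m_0R+J)JM\le 0$, so no inductive progress is made. The paper avoids this entirely: its base case is $\dim_R M<0$, i.e.\ $M=0$.

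Second, your proposed endgame is circular. You correctly obtain the graded isomorphism $x\colon H\xrightarrow{\sim} H[-a]$ for $i>d$ and conclude $A=H/(\mathfrak m_0R+J)H=0$ via the ``bounded above'' argument. But to invoke Proposition~\ref{prop_melkerson} on $H$ you need $(JH:_H R_+)$ to be Artinian. Now $x$ bijective together with $(R_+,J)$-torsion forces $H=JH$: for $h\in H$ write $h=x^{n}h'$ and use $x^{n}h\in Jh$ to get $h\in Jh'\subseteq JH$. Hence $(JH:_H R_+)=H$, and the hypothesis of Proposition~\ref{prop_melkerson} is exactly the Artinianness of $H$ that you are trying to establish. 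The paper's route, with $a\in J$, gives $aH=H$ for $a$ in the graded maximal ideal and concludes by a Nakayama-type argument, without needing to promote Artinianness from $A$ to $H$.
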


\begin{proof}
We argue by induction. Let $n=\dim_{R} M$. Suppose $n=-1$, so there is nothing to be proved, as $M=0$. Assume then $n\geq 0$ and that the result is established for $R$-modules of dimension smaller than $n$.
By \cite[Corollary 2.5]{TYY},
$
H_{R_{+}, J}^{i}(\Gamma_{J}(M)) \cong H_{R_{+}}^{i}(\Gamma_{J}(M))
$
for all $i\geq 0$, once $\Gamma_{J}(M)$ is an $J$-torsion module.
By using the fact that
$\sqrt{\text{ann}\left(\frac{\Gamma_{J}(M)}{\mathfrak{m}_{0}\Gamma_{J}(M)}\right)}=\sqrt{\mathfrak{m}_{0}R+\text{ann}(\Gamma_{J}(M))}$ and
$\sqrt{ \text{ann}\left(\frac{M/JM}{\mathfrak{m}_{0}(M/JM)}\right)} = \sqrt{\mathfrak{m}_{0}R+J+\text{ann}(M)}$, one can conclude
$
\dim \frac{\Gamma_{J}(M)}{\mathfrak{m}_{0}\Gamma_{J}(M)} \leq \dim \frac{M/JM}{\mathfrak{m}_{0}(M/JM)}=d.
$
We use then \cite[Lemma 3.4]{BH} to obtain
$
H_{R_{+}, J}^{i}(\Gamma_{J}{(M)})=0
$
for all $i > d$.
Moreover, the exact sequence $0 \rightarrow \Gamma_{J}(M) \rightarrow M \rightarrow M/\Gamma_{J}(M) \rightarrow 0$ yields the long exact sequence
\begin{equation}\label{long}
\cdot \cdot \cdot \rightarrow H_{R_{+}, J}^{i}(\Gamma_{J}(M)) \rightarrow H_{R_{+}, J}^{i}(M) \rightarrow H_{R_{+}, J}^{i}(M/\Gamma_{J}(M))\rightarrow \cdot \cdot \cdot
\end{equation}
We then derive
$$
H_{R_{+}, J}^{i}(M) \cong H_{R_{+}, J}^{i}(M/\Gamma_{J}(M)),
$$
for all $i > d$. We have
$$
\dim \frac{M/\Gamma_{J}(M)}{(\mathfrak{m}_{0}R+J)M/\Gamma_{J}(M)}= \dim \frac{M}{(\mathfrak{m}_{0}R+J)M+\Gamma_{J}(M)}\leq d.
$$
In this way, we can assume $M$ is an $J$-torsion-free module. So there exists an homogeneous element $a\in J$ which is a non zero-divisor on $M$.
The exact sequence $0 \rightarrow M \stackrel{a}{\rightarrow} M \rightarrow M/aM \rightarrow 0$ yields the long exact sequence
$$
\cdot \cdot \cdot \rightarrow H_{R_{+}, J}^{i}(M) \stackrel{a}{\rightarrow} H_{R_{+}, J}^{i}(M)\rightarrow H_{R_{+}, J}^{i}(M/aM) \rightarrow \cdot \cdot \cdot
$$
Once $\dim_{R} M/aM=n-1$ and $\dim_{R} \frac{M/aM}{(\mathfrak{m}_{0}R+J)M/aM}=d$, we use the inductive hypothesis to conclude
$H_{R_{+}, J}^{i}(M/aM)=0$ for all $i > d$. Then the above long exact sequence says $aH_{R_{+},J}^{i}(M)=H_{R_{+}, J}^{i}(M)$. By Nakayama's Lemma, we obtain the desired result.
\end{proof}

Next result is a similar result to \cite[Lemma 3.4(b)]{BH}.

\begin{thm}\label{Theorem5}
Let $J$ be a graded ideal of $R$.
Then
$$
\dim M/(\mathfrak{m}_{0}R+J)M = {\rm sup} \{i \ :  H^{i}_{R_{+},J}(M)\neq 0  \}.
$$

\end{thm}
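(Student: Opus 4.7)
The plan is to establish the two inequalities separately. The upper bound $\sup\{i : H^{i}_{R_{+},J}(M)\neq 0\}\leq d$, where $d=\dim M/(\mathfrak{m}_{0}R+J)M$, is immediate from Theorem \ref{anulamento}, so the entire substance is to exhibit a single nonvanishing, namely $H^{d}_{R_{+},J}(M)\neq 0$.

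The key observation is that $M/JM$ is annihilated by $J$, hence is a $J$-torsion module, so by \cite[Corollary 2.5]{TYY} one has $H^{i}_{R_{+},J}(M/JM)\cong H^{i}_{R_{+}}(M/JM)$ for every $i$. I would then apply the usual (un-paired) version, \cite[Lemma 3.4]{BH}, to the finite graded module $M/JM$, yielding
$$
\sup\{i : H^{i}_{R_{+}}(M/JM)\neq 0\}=\dim\frac{M/JM}{\mathfrak{m}_{0}(M/JM)}=\dim\frac{M}{(\mathfrak{m}_{0}R+J)M}=d.
$$
In particular $H^{d}_{R_{+},J}(M/JM)\cong H^{d}_{R_{+}}(M/JM)\neq 0$.

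To transfer this nonvanishing to $M$ itself, I would use the short exact sequence $0\to JM\to M\to M/JM\to 0$, which induces the long exact sequence
$$
\cdots\longrightarrow H^{d}_{R_{+},J}(M)\longrightarrow H^{d}_{R_{+},J}(M/JM)\longrightarrow H^{d+1}_{R_{+},J}(JM)\longrightarrow\cdots
$$
It therefore suffices to kill the right-hand term, and this is where Theorem \ref{anulamento} is invoked a second time. Since $JM$ is a finite graded submodule of $M$, we have $\operatorname{ann}(JM)\supseteq\operatorname{ann}(M)$, hence
$$
\dim\frac{JM}{(\mathfrak{m}_{0}R+J)JM}\leq\dim\frac{R}{\mathfrak{m}_{0}R+J+\operatorname{ann}(M)}=\dim\frac{M}{(\mathfrak{m}_{0}R+J)M}=d,
$$
so Theorem \ref{anulamento} gives $H^{d+1}_{R_{+},J}(JM)=0$. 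Consequently $H^{d}_{R_{+},J}(M)$ surjects onto $H^{d}_{R_{+},J}(M/JM)$, which is nonzero, so $H^{d}_{R_{+},J}(M)\neq 0$.

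The trivial degenerate cases (when $M=0$ or $M/(\mathfrak{m}_{0}R+J)M=0$, in which graded Nakayama forces $M=0$) should be handled at the outset by the convention $\dim 0=-\infty=\sup\emptyset$. The only genuinely nontrivial point in the argument is the dimension estimate for $JM/(\mathfrak{m}_{0}R+J)JM$, but this reduces to a standard comparison of annihilators for finitely generated modules and presents no real obstacle; everything else is a direct application of \cite[Corollary 2.5]{TYY}, \cite[Lemma 3.4]{BH}, and Theorem \ref{anulamento}.
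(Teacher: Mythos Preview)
Your argument is correct and follows the same skeleton as the paper's: the upper bound comes from Theorem~\ref{anulamento}, and for nonvanishing one passes to a $J$-torsion quotient, invokes \cite[Corollary 2.5]{TYY} to reduce to ordinary local cohomology, cites a classical result for $H^{d}_{R_{+}}$ of that quotient, and then kills the $H^{d+1}$ of the kernel via Theorem~\ref{anulamento} together with an annihilator/dimension comparison.

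The only difference is in the choice of quotient. The paper uses $0\to(\mathfrak{m}_{0}R+J)M\to M\to M/(\mathfrak{m}_{0}R+J)M\to 0$ and appeals to \cite[Corollaries 35.20 and 36.19]{HIO} for the nonvanishing of $H^{d}_{R_{+}}(M/(\mathfrak{m}_{0}R+J)M)$, whereas you use $0\to JM\to M\to M/JM\to 0$ and appeal directly to \cite[Lemma 3.4]{BH}. Your route is marginally cleaner in that it invokes exactly the $J=0$ case of the theorem being proved rather than going through \cite{HIO}; the paper's route has the mild advantage that the quotient is already a module over the graded field-algebra $R/\mathfrak{m}_{0}R$, which makes the nonvanishing very concrete. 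Either way the dimension estimate for the kernel (your $JM$, the paper's $(\mathfrak{m}_{0}R+J)M$) is handled identically via $\operatorname{ann}(\text{kernel})\supseteq\operatorname{ann}(M)$.
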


\begin{proof}
Consider the exact sequence
$$
0 \rightarrow (\mathfrak{m}_{0}R +J)M \rightarrow M \rightarrow M/(\mathfrak{m}_{0}R+J)M \rightarrow 0,
$$
which yields the long exact sequence
$$
\cdot \cdot \cdot \rightarrow H_{R_{+}, J}^{i}(M) \rightarrow H_{R_{+}, J}^{i}(M/(\mathfrak{m}_{0}R + J)M)\rightarrow H_{R_{+}, J}^{i+1}((\mathfrak{m}_{0}R + J)M) \rightarrow \cdot \cdot \cdot.
$$
Now note that
$$
\dim \frac{(\mathfrak{m}_{0}R+J)M}{(\mathfrak{m}_{0}R+J)(\mathfrak{m}_{0}R+J)M} \leq \dim \frac{M}{(\mathfrak{m}_{0}R+J)^{2}M}=\dim \frac{M}{(\mathfrak{m}_{0}R+J)M}=d;
$$
thus, because of Theorem \ref{anulamento}, we obtain $H_{R_{+},\mathfrak{b}}^{i}((\mathfrak{m}_{0}R+J)M)=0$ for $i> d$.
On the other hand, by using \cite[Corollary 2.5]{TYY} and \cite[Corollary 35.20]{HIO}, one sees
$$
H_{R_{+}, J}^{d} (M/(\mathfrak{m}_{0}R + J)M ) \cong H_{R_{+}}^{d}(M/(\mathfrak{m}_{0}R + J)M)  \cong H_{(R/\mathfrak{m}_{0}R)_{+}}^{d}(M/(\mathfrak{m}_{0}R + J)M).
$$
This last cohomology module is nonzero by \cite[Corollary 36.19]{HIO}. By observing the above long exact sequence we get the desired result.
\end{proof}

\section{Depth $(I,J)$ on graded module}

In this section, we work with the concept of depth of a pair homogeneous ideals of $R$, and obtain an expression for it as $M$ and $R$ are both Cohen-Macaulay. For the ordinary case the \cite[Theorem 6.2.7]{BS} says
$${\rm depth}_{I}(M)= {\rm inf}\{ i \in \N_0 \ : \ H_{I}^{i}(M) \neq 0  \},$$
for an $R$-ideal $I$ and a finite $R$-module $M$ such that $IM\neq M$. For the case of a pair $(I,J)$ ideals it was defined in \cite[Definition 3.1]{AAS}; we then recall this definition as follows.

Throughout this section we assume $R$ and $M$ are as in section 3.

\begin{defi} Let $I,$ $J$ be two homogeneous ideals of the graded ring $R$ and $M$ a graded $R$-module. We define depth of $(I,J)$ on $M$ by
$${\rm depth}(I,J,M)={\rm inf}\{{\rm depth}({\mathfrak a},M) \ | \ {\mathfrak a}\in \tilde{W}(I,J) \}$$
\noindent it this infimum exists, and $\infty$ otherwise.
\end{defi}

\begin{rem}\label{remark8}{\rm
In the general case, by \cite[Theorem 4.1 and Theorem 3.2]{TYY} or \cite[Proposition 3.3]{AAS}, we have

$$
{\rm depth}(I,J,M)={\rm inf}\{i\in\N_0| H_{I,J}^i(M)\neq 0\}.$$}
\end{rem}

Let $I,J,K$ be three homogeneous ideals of $R$. Let us introduce the number
$$
\text{ht}(I,J,K):= \text{inf} \{ \text{ht}(\mathfrak{p}) \ | \ \mathfrak{p}\in W(I,J)\cap V(K) \}.
$$
Note that when $J=(0)$,
$$
\text{ht}(I,J,K)=\text{ht}(I+K).
$$

\begin{prop}
Assume $M$ is a Cohen-Macaulay $R$-module and $R$ is a Cohen-Macaulay ring. Set $I=\sqrt{\emph{ann}_{R}M}$. Then
$$
\emph{depth} \hspace{0.05cm} (R_{+},J,M)=\emph{ht}(R_{+},I,J)-\emph{ht}(I).
$$

\noindent In particular, $\emph{ht}(R_{+},I,J)-\emph{ht}(I)={\rm inf}\{i\in\N_0 \ | \ H_{R_+,J}^i(M)\neq 0\}.$
\end{prop}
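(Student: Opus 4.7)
The plan is to reduce the identity to a purely set-theoretic matching of heights via the prime characterization of depth for Cohen--Macaulay modules, and to read off the ``in particular'' clause directly from Remark \ref{remark8}.

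First I would invoke Remark \ref{remark8} together with \cite[Theorem 4.1]{TYY}, which identifies the least nonvanishing index of $H^{i}_{I,J}(M)$ with $\inf\{\text{depth}(M_{\mathfrak{p}})\mid \mathfrak{p}\in W(I,J)\}$. This yields
\[
\text{depth}(R_+,J,M)=\inf\{\text{depth}(M_{\mathfrak{p}})\mid \mathfrak{p}\in W(R_+,J)\}.
\]
Since $M$ is Cohen--Macaulay, $\text{depth}(M_{\mathfrak{p}})=+\infty$ for $\mathfrak{p}\notin\text{Supp}(M)=V(I)$ and equals $\dim M_{\mathfrak{p}}$ for $\mathfrak{p}\in V(I)$, so only primes in $W(R_+,J)\cap V(I)$ actually contribute to the infimum.

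Next I would rewrite $\dim M_{\mathfrak{p}}$ purely in terms of heights in $R$. Because $I=\sqrt{\text{ann}\,M}$, one has $\dim M_{\mathfrak{p}}=\dim R_{\mathfrak{p}}/IR_{\mathfrak{p}}=\text{ht}(\mathfrak{p}/I)$. Since $R$ is Cohen--Macaulay (hence catenary) and the Cohen--Macaulay module $M$ is equidimensional — which forces every minimal prime of $I=\sqrt{\text{ann}\,M}$ to have the common height $\text{ht}(I)$ — the dimension formula in $R_{\mathfrak{p}}$ gives $\text{ht}(\mathfrak{p}/I)=\text{ht}(\mathfrak{p})-\text{ht}(I)$ for every $\mathfrak{p}\supseteq I$. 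Substituting,
\[
\text{depth}(R_+,J,M)=\inf\{\text{ht}(\mathfrak{p})\mid \mathfrak{p}\in W(R_+,J)\cap V(I)\}-\text{ht}(I),
\]
and the inner infimum is precisely $\text{ht}(R_+,I,J)$ by the definition recalled just before the proposition. The ``in particular'' part follows at once from Remark \ref{remark8}.

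I expect the only genuinely delicate step to be the height identity $\text{ht}(\mathfrak{p}/I)=\text{ht}(\mathfrak{p})-\text{ht}(I)$, which is not automatic in a general Noetherian ring. It requires using both Cohen--Macaulay hypotheses essentially: the catenary property of $R$ together with the unmixedness theorem for the Cohen--Macaulay module $M$, which guarantees that all associated primes of $M$ — equivalently, all minimal primes of $I$ — share the common height $\text{ht}(I)$. Once this identity is in hand the rest of the argument is just a substitution and a direct comparison of two infima.
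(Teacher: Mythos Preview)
Your argument is correct and follows essentially the same route as the paper: both invoke \cite[Theorem 4.1]{TYY} to pass to $\inf\{\text{depth}\,M_{\mathfrak p}\mid \mathfrak p\in W(R_+,J)\}$, restrict to $V(I)$ because $M_{\mathfrak p}=0$ otherwise, replace depth by dimension via the Cohen--Macaulay hypothesis on $M$, and then use the Cohen--Macaulay hypothesis on $R$ together with the equal-height property of the minimal primes of $I$ to rewrite $\dim M_{\mathfrak p}$ as $\text{ht}(\mathfrak p)-\text{ht}(I)$. The only cosmetic difference is that the paper phrases the last step as $\dim R_{\mathfrak p}-\text{ht}\,I_{\mathfrak p}$ and cites \cite[Lemma 1.2.2]{L} for $\text{ht}\,I_{\mathfrak p}=\text{ht}\,I$, whereas you argue via catenarity and equidimensionality of $M$; these are the same fact.
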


\begin{proof}
By \cite[Theorem 4.1]{TYY} (or \cite[Proposition 3.3]{AAS}),
$$
\text{depth} \hspace{0.05cm} (R_{+},J,M)=\text{inf}\{ \text{depth} \hspace{0.05cm} M_{\mathfrak{p}} \ | \ \mathfrak{p}\in W(R_{+},J)  \}.
$$
Note that
$$
\text{depth} \hspace{0.05cm} (R_{+},J,M)=\text{inf}\{ \text{depth} \hspace{0.05cm} M_{\mathfrak{p}} \ | \ \mathfrak{p}\in W(R_{+},J) \cap V(I) \},
$$
once $M_{\mathfrak{p}}=0$ as $\mathfrak{p}\not\in V(I)$, that is, $\text{depth} \hspace{0.05cm} M_{\mathfrak{p}}= \infty$.
Since $M$ is Cohen-Macaulay, in particular,
$$
\text{depth} \hspace{0.05cm} M_{\mathfrak{p}}= \dim M_{\mathfrak{p}}
$$
for each $\mathfrak{p}\in W(R_{+},J)$. But $R$ is also Cohen-Macaulay by hypothesis, so $\dim M_{\mathfrak{p}}=\dim R_{\mathfrak{p}}/I_{\mathfrak{p}}=\dim R_{\mathfrak{p}}- \text{ht} I_{\mathfrak{p}}$. By \cite[Lemma 1.2.2]{L},
all minimal primes of $I$ have the same height, so that, $\text{ht} I_{\mathfrak{p}}=\text{ht} I$ for each $\mathfrak{p}$. By combining the above arguments, we obtain
$$
\text{depth} \hspace{0.05cm} (R_{+},J,M)=\text{inf} \{\dim R_{\mathfrak{p}}- \text{ht} I \ | \ \mathfrak{p}\in W(R_{+},J) \cap V(I)\}.
$$
This last one equals $\text{ht}(R_{+},J,K)- \text{ht} I$, by definition. The proof is completed.
\end{proof}

\begin{prop}
Let $J$ be a graded ideal of $R$. Then any integer $i$ for which $H_{R_+,J}^i(M)\neq 0$ satisfies $${\rm depth}(R_+,J,M)\leq i\leq \dim M/({\mathfrak m_0}R+J)M.$$
\end{prop}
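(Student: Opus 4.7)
The statement sandwiches any nonvanishing index $i$ between two quantities that have already been identified with cohomological invariants earlier in the paper, so the plan is to split the inequality into its two halves and invoke the relevant prior results directly.

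For the upper bound $i \le \dim M/(\mathfrak{m}_0 R + J)M$, I would simply contrapose Theorem \ref{anulamento}. Setting $d = \dim M/(\mathfrak{m}_0 R + J)M$, that theorem guarantees $H_{R_+, J}^{i}(M) = 0$ for every $i > d$. Hence if $H_{R_+, J}^{i}(M) \neq 0$, we must have $i \leq d$, which is exactly the upper bound.

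For the lower bound $\text{depth}(R_+, J, M) \le i$, I would appeal to Remark \ref{remark8} (which itself follows from \cite[Theorem 4.1]{TYY} / \cite[Theorem 3.2]{TYY} or \cite[Proposition 3.3]{AAS}), asserting the identity
$$
\text{depth}(R_+, J, M) = \inf\{k \in \N_0 \mid H_{R_+, J}^{k}(M) \neq 0\}.
$$
Since this infimum is taken over the set of indices at which the cohomology is nonzero, it is automatically bounded above by any particular such index $i$, giving the required inequality.

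The combination of these two observations yields the full chain $\text{depth}(R_+, J, M) \le i \le \dim M/(\mathfrak{m}_0 R + J)M$. There is no genuine obstacle here; the statement is a direct consequence of packaging Theorem \ref{anulamento} with Remark \ref{remark8}, so the only thing to be careful about is citing those two results explicitly and noting that the upper bound uses the vanishing theorem in its contrapositive form while the lower bound is the definition of depth rephrased via local cohomology.
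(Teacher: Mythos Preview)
Your proposal is correct and matches the paper's own one-line proof, which simply cites Remark~\ref{remark8} for the lower bound and Theorem~\ref{Theorem5} for the upper bound. The only cosmetic difference is that you invoke Theorem~\ref{anulamento} rather than Theorem~\ref{Theorem5} for the inequality $i\le\dim M/(\mathfrak{m}_0R+J)M$, but since Theorem~\ref{Theorem5} is built on Theorem~\ref{anulamento}, either citation suffices.
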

\begin{proof} The proof follows by Remark \ref{remark8} and Theorem \ref{Theorem5}.
\end{proof}

\begin{cor}\label{cor8} There is exactly one integer $i$ for which $H_{R_+,J}^i(M)\neq 0$ if and only if
$$
{\rm depth}(R_+,J,M)=\dim M/({\mathfrak m_0}R+J)M.
$$
\end{cor}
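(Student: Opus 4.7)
The plan is to reduce the statement to the two characterizations already available: Remark \ref{remark8}, which gives
$$
{\rm depth}(R_{+},J,M)=\inf\{i\in\N_{0}\mid H_{R_{+},J}^{i}(M)\neq 0\},
$$
and Theorem \ref{Theorem5}, which gives
$$
\dim M/(\mathfrak{m}_{0}R+J)M=\sup\{i\mid H_{R_{+},J}^{i}(M)\neq 0\}.
$$
The corollary is then essentially the tautology ``$\inf=\sup$ iff the set has exactly one element,'' sandwiched by the preceding proposition. I would not expect any substantive obstacle; the only thing one must be slightly careful about is the existence of at least one nonvanishing index.

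For the forward direction, assume there is a unique integer $i_{0}$ with $H_{R_{+},J}^{i_{0}}(M)\neq 0$. Then the set $\{i\mid H_{R_{+},J}^{i}(M)\neq 0\}=\{i_{0}\}$, so its infimum and supremum both coincide with $i_{0}$. Applying Remark \ref{remark8} and Theorem \ref{Theorem5} simultaneously yields
$$
{\rm depth}(R_{+},J,M)=i_{0}=\dim M/(\mathfrak{m}_{0}R+J)M.
$$

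For the converse, suppose ${\rm depth}(R_{+},J,M)=\dim M/(\mathfrak{m}_{0}R+J)M=:d$. By the preceding proposition, every integer $i$ with $H_{R_{+},J}^{i}(M)\neq 0$ satisfies $d\leq i\leq d$, forcing $i=d$. Hence there is at most one such index, namely $d$. To see that there is at least one, note that $d=\dim M/(\mathfrak{m}_{0}R+J)M$ is finite, and Theorem \ref{Theorem5} guarantees $H_{R_{+},J}^{d}(M)\neq 0$ (since the supremum is attained by that same theorem's construction, where the module $H_{R_{+},J}^{d}(M/(\mathfrak{m}_{0}R+J)M)$ was shown to be nonzero and contributes to $H_{R_{+},J}^{d}(M)$). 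Thus $i_{0}=d$ is the unique integer with nonvanishing cohomology, which completes the proof.
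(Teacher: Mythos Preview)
Your proof is correct and follows the same route the paper intends: the corollary is stated without proof immediately after the proposition bounding the nonvanishing indices between ${\rm depth}(R_{+},J,M)$ and $\dim M/(\mathfrak{m}_{0}R+J)M$, and you supply precisely the expected ``$\inf=\sup$'' argument using Remark~\ref{remark8} and Theorem~\ref{Theorem5}. Your care in verifying that at least one nonvanishing index exists (by appealing to the proof of Theorem~\ref{Theorem5}, which explicitly shows $H_{R_{+},J}^{d}(M)\neq 0$) is a reasonable elaboration of a point the paper leaves implicit.
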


\section{Basic finiteness, vanishing theorem and\\ asymptotical stability}

In this section, we generate two classical results about the graded components of local cohomology for the case of local cohomology with respect to a pair of ideals (Proposition \ref{vanishing} and Theorem \ref{finitely generated components}). Finally,
we obtain a result on the asymptotical stability of the sequence $\{{\rm Ass}_{R_0}(H_{R_+,J}^i(M))_n\}_{n\in \Z}$.

Throughout this section, we will make the following assumptions.

Let $R= \oplus_{d\geq 0}R_{d}$ denote a positively graded commutative Noetherian ring, which is standard, that is, $R=R_{0}[R_{1}]$ and let $M$ be a finitely generated module over $R$.
Assume $R_{0}$ is a local ring of maximal ideal $\mathfrak{m}_{0}$ and residual field $R_0/\mathfrak{m}_0$ is infinite. Set $R_{+}=\oplus_{i>0} R_{i}$, the irrelevant ideal of $R$.

It is well-known that $H_{R_+}^i(M)_{n}$ is finitely generated $R_{0}$-module for all $n \in \mathbb{Z}$ and $H_{R_+}^i(M)_{n}=0,$ for $n$ sufficiently large. Next assertion gives a positive answer for the case of cohomology modules with respect to a pair of ideals.

\begin{rem}\label{basic}
Let $I,K,J$ be arbitrary ideals of $R$, and let $M$ be a $K$-torsion $R$-module. It is easy to check that
$
H_{I+K,J}^{i}(M)\cong H_{I,J}^{i}(M)
$
for all $i\in \N_0$.
\end{rem}

\begin{prop}\label{vanishing}
Let $J$ be an ideal generated by elements of zero degree. Set $\mathfrak{b}=\mathfrak{b}_0 + R_+$. Then for $i \geq 1$, $$H_{\mathfrak{b},J}^i(M)_{n}=0,$$ for $n$ sufficiently large.
\end{prop}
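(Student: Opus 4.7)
The plan is to reduce to the case of $J$-torsion-free $M$ via a short exact sequence, then run a regular-element induction on dimension, and finally close via Theorem~\ref{artianianess} combined with Nakayama's lemma. The short exact sequence $0 \to \Gamma_J(M) \to M \to \bar M \to 0$, where $\bar M = M/\Gamma_J(M)$, reduces (via the induced long exact sequence in $H_{\mathfrak{b},J}^\bullet$) the problem to handling $\Gamma_J(M)$ and $\bar M$ separately. For $\Gamma_J(M)$, \cite[Corollary 2.5]{TYY} identifies $H_{\mathfrak{b},J}^i(\Gamma_J(M))$ with $H_\mathfrak{b}^i(\Gamma_J(M))$. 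Because $\mathfrak{b} \supseteq R_+$, one obtains $H_\mathfrak{b}^i(\Gamma_J(M))_n = 0$ for $n \gg 0$ from the factorization $\Gamma_\mathfrak{b} = \Gamma_{\mathfrak{b}_0 R} \circ \Gamma_{R_+}$ (a consequence of $\Gamma_{I+K} = \Gamma_I \cap \Gamma_K$), the associated Grothendieck spectral sequence $E_2^{p,q} = H_{\mathfrak{b}_0 R}^p(H_{R_+}^q(\Gamma_J(M))) \Rightarrow H_\mathfrak{b}^{p+q}(\Gamma_J(M))$, and the classical vanishing of $H_{R_+}^q$-components in high degrees, combined with the fact that $\mathfrak{b}_0 R$ is generated in degree zero so that $H_{\mathfrak{b}_0 R}^p$ preserves componentwise vanishing.

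For $\bar M$ ($J$-torsion-free), the infinite-residue-field hypothesis enables prime avoidance to produce $a \in J_0$ that is $\bar M$-regular: no associated prime of $\bar M$ contains $J$, hence $J_0$ is not covered by the ideals $\mathfrak{p} \cap R_0$ with $\mathfrak{p} \in \operatorname{Ass}_R(\bar M)$. Since $\deg a = 0$, the sequence $0 \to \bar M \xrightarrow{a} \bar M \to \bar M/a\bar M \to 0$ carries no grading shift and induces the long exact sequence
\[
\cdots \to H_{\mathfrak{b},J}^{i-1}(\bar M/a\bar M)_n \to H_{\mathfrak{b},J}^i(\bar M)_n \xrightarrow{a} H_{\mathfrak{b},J}^i(\bar M)_n \to H_{\mathfrak{b},J}^i(\bar M/a\bar M)_n \to \cdots.
\]
I would induct on $\dim \bar M$; the base case $\dim \bar M = 0$ is immediate since then $\bar M$ is $\mathfrak{b}$-torsion (hence $(\mathfrak{b},J)$-torsion), so $H_{\mathfrak{b},J}^i(\bar M) = 0$ for $i \geq 1$. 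In the inductive step, $\dim(\bar M/a\bar M) < \dim \bar M$, so the inductive hypothesis kills the outer terms for $n$ large, yielding that multiplication by $a$ is surjective on $H_{\mathfrak{b},J}^i(\bar M)_n$ in high degree.

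To close the induction, invoke Theorem~\ref{artianianess} (or its evident analog for $\mathfrak{b}$ in place of $R_+$): the quotient $H_{\mathfrak{b},J}^i(\bar M)/(\mathfrak{m}_0 R + J) H_{\mathfrak{b},J}^i(\bar M)$ is Artinian as a graded $R$-module, hence its components vanish for $n \gg 0$. A direct computation, using that both $\mathfrak{m}_0 R$ and $J$ are generated in degree zero and $(RH)_n = H_n$, gives
\[
((\mathfrak{m}_0 R + J) H_{\mathfrak{b},J}^i(\bar M))_n = (\mathfrak{m}_0 + J_0) H_{\mathfrak{b},J}^i(\bar M)_n \subseteq \mathfrak{m}_0 H_{\mathfrak{b},J}^i(\bar M)_n
\]
(as $J_0 \subseteq \mathfrak{m}_0$ whenever $J \neq R$), so $H_{\mathfrak{b},J}^i(\bar M)_n = \mathfrak{m}_0 H_{\mathfrak{b},J}^i(\bar M)_n$ for $n$ large. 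Combined with the finite-generation of the graded components as $R_0$-modules (the assertion of Theorem~\ref{finitely generated components}, to be established in tandem), Nakayama's lemma forces $H_{\mathfrak{b},J}^i(\bar M)_n = 0$ for $n$ large. The principal obstacle is precisely this concluding Nakayama step: it depends on the finite-generation statement of Theorem~\ref{finitely generated components}, so either the two results must be proved simultaneously in a joint induction, or the finite-generation must be established first as a preliminary lemma.
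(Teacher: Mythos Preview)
Your core strategy --- pass to the $J$-torsion-free quotient via a short exact sequence, then induct on $\dim M$ using a degree-zero $M$-regular element, and finish with Nakayama --- is exactly the paper's. The paper's execution is more direct in two respects, though.

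First, the paper avoids the Grothendieck spectral sequence entirely. It argues in two stages: it first proves the special case $\mathfrak{b}=R_+$ (where the $\Gamma_J(M)$ piece is dispatched by \cite[Corollary~2.5]{TYY} together with the classical vanishing of $H_{R_+}^i$ in large degrees), and then reduces the general $\mathfrak{b}=\mathfrak{b}_0+R_+$ to this via the short exact sequence $0\to\Gamma_{\mathfrak{b}_0}(M)\to M\to M/\Gamma_{\mathfrak{b}_0}(M)\to 0$ and Remark~\ref{basic}, which identifies $H_{\mathfrak{b},J}^i(\Gamma_{\mathfrak{b}_0}(M))\cong H_{R_+,J}^i(\Gamma_{\mathfrak{b}_0}(M))$; one then reruns the same regular-element induction with $x\in\mathfrak{b}_0$. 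This two-stage reduction is lighter than your spectral-sequence argument and buys the same conclusion.

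Second, the paper does not route the closing step through Theorem~\ref{artianianess} at all. Once the inductive hypothesis kills $H_{R_+,J}^i(M/xM)_n$ for $n\gg 0$, the long exact sequence already gives $xH_{R_+,J}^i(M)_n=H_{R_+,J}^i(M)_n$ directly, and the paper simply writes ``Nakayama's Lemma completes the first part.'' In particular the paper never cites Theorem~\ref{finitely generated components} here, so the formal circularity you flag does not appear in its presentation. That said, your underlying worry is legitimate: the bare Nakayama step still tacitly presupposes that each $H_{R_+,J}^i(M)_n$ is a finitely generated $R_0$-module, which is precisely the content of Theorem~\ref{finitely generated components} --- a result the paper proves \emph{afterwards}, and whose proof uses this very proposition. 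Your proposed remedy of running vanishing and componentwise finite generation together in a single joint induction on $\dim M$ is the clean way to make the whole package rigorous; the paper leaves this point implicit.
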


\begin{proof}
Firstly we will prove the assertion as $\mathfrak{b}_0=0$.
We proceed by induction on $\dim M$. If $\dim M=0$ then
by \cite[Theorem 4.7(1)]{TYY}, the result is clearly true in this case. Suppose now $\dim M>0$ and the result is established for finitely generated modules of dimension less than $\dim M$.

By \cite[Corollary 2.5]{TYY},
$
H_{R_{+}, J}^{i}(\Gamma_{J}(M)) \cong H_{R_{+}}^{i}(\Gamma_{J}(M)),
$
for all $i\geq 0$. The exact sequence $0 \rightarrow \Gamma_{J}(M) \rightarrow M \rightarrow M/\Gamma_{J}(M) \rightarrow 0$ yields then the long exact sequence
$$
H_{R_{+}}^{i}(\Gamma_{J}(M))_{n} \rightarrow H_{R_{+}, J}^{i}(M)_{n} \rightarrow H_{R_{+}, J}^{i}(M/\Gamma_{J}(M))_{n}\rightarrow H_{R_{+}}^{i+1}(\Gamma_{J}(M))_{n}.
$$
One can conclude that
$H_{R_{+}, J}^{i}(M)_{n}$ is isomorphic to $H_{R_{+}, J}^{i}(M/\Gamma_{J}(M))_{n}$ for all $n$ sufficiently large. Hence, we can assume that $M$ is an $J$-torsion-free $R$-module. As $J$ is generated by homogeneous elements of zero degree, by a Prime Avoidance Lemma there exists an element $x$ in $J$ of zero degree which is a non zero divisor on $M$.

The exact sequence
$$
0 \rightarrow M \stackrel{x}{\rightarrow} M \rightarrow M/xM \rightarrow 0
$$
induces an exact sequence

$$
 H_{R_{+},J}^{i}(M)_{n} \stackrel{x}{\rightarrow} H_{R_{+},J}^{i}(M)_{n} \rightarrow H_{R_{+},J}^{i}(M/xM)_{n}.
$$
By induction, for every $i\geq 1$, $H_{R_{+},J}^{i}(M/xM)_{n}=0$ for all $n$ sufficiently large;
the above exact sequence yields then an epimorphism
$$
H_{R_{+},J}^{i}(M)_{n} \stackrel{x}{\rightarrow} H_{R_{+},J}^{i}(M)_{n},
$$
for all $n>>0$, so
$$
H_{R_{+},J}^{i}(M)_{n}=xH_{R_{+},J}^{i}(M)_{n}.
$$
Nakayama's Lemma completes the first part.

Now we conclude the proposition. We proceed by induction on $\dim M$. If $\dim M=0$, the result is clearly true.

Consider the exact sequence $0 \rightarrow \Gamma_{\mathfrak{b}_0}(M) \rightarrow M \rightarrow M/\Gamma_{\mathfrak{b}_0}(M) \rightarrow 0$, which yields, by Remark \ref{basic}, the long exact sequence
$$
H_{R_{+}}^{i}(\Gamma_{\mathfrak{b}_0}(M))_{n} \rightarrow H_{R_{+}, J}^{i}(M)_{n} \rightarrow H_{R_{+}, J}^{i}(M/\Gamma_{\mathfrak{b}_0}(M))_{n}\rightarrow H_{R_{+}}^{i+1}(\Gamma_{\mathfrak{b}_0}(M))_{n}.
$$
So, by using the first part, we obtain the isomorphism
$$H_{R_{+}, J}^{i}(M)_{n} \rightarrow H_{R_{+}, J}^{i}(M/\Gamma_{\mathfrak{b}_0}(M))_{n}
$$
for $n$ large. We then may assume there exists an element $x \in \mathfrak{b}_0$ which is a nonzero divisor on $M$. The final result is obtained by following the same arguments as in the last paragraph of the first part.

\end{proof}



\begin{lem}
If $J$ is generated by elements of zero degree and $\Gamma_{R_+,J}(M)_{n}=0$ for $n>>0$, then, for each $i\geq 0$, $H_{R_+,J}^i(M)_{n}$ is a finitely generated $R_{0}$-module for all integer $n$.
\end{lem}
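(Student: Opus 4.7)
My plan is to argue by induction on $i$. The base case $i=0$ is immediate: $\Gamma_{R_+,J}(M)_n$ is a submodule of $M_n$, which is a finitely generated $R_0$-module, so by the Noetherianity of $R_0$ it is finitely generated. For the inductive step I first reduce to the case $\Gamma_{R_+,J}(M)=0$. The hypothesis $\Gamma_{R_+,J}(M)_n=0$ for $n\gg 0$, together with the fact that $\Gamma_{R_+,J}(M)$ is a finitely generated graded submodule of $M$ (so its degrees are bounded below too), forces it to be concentrated in finitely many degrees and hence to be annihilated by a power of $R_+$. Applying Remark \ref{basic} with $I=0$ and $K=R_+$, and observing that $\Gamma_{0,J}$ is the identity functor, gives $H^i_{R_+,J}(\Gamma_{R_+,J}(M))=0$ for $i\geq 1$. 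The long exact sequence associated to $0\to \Gamma_{R_+,J}(M)\to M\to \overline M\to 0$ then yields $H^i_{R_+,J}(M)\cong H^i_{R_+,J}(\overline M)$, and $\Gamma_{R_+,J}(\overline M)=0$ by \cite[Corollary 1.13]{TYY}, so I may replace $M$ by $\overline M$.

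Next, since $\Gamma_{R_+}(M)\subseteq \Gamma_{R_+,J}(M)=0$, graded Prime Avoidance (the residue field of $R_0$ being infinite) produces an $M$-regular element $x\in R_1$. The short exact sequence $0\to M(-1)\xrightarrow{\,\cdot x\,} M\to M/xM\to 0$ induces the long exact sequence of graded $(R_+,J)$-cohomology; from its $\Gamma$-part we see that $\Gamma_{R_+,J}(M/xM)_n$ embeds into $H^1_{R_+,J}(M)_{n-1}$, and by Proposition \ref{vanishing} the latter vanishes for $n\gg 0$. Therefore $M/xM$ also satisfies the hypothesis of the lemma, so by the outer inductive hypothesis on $i$ the module $H^{i-1}_{R_+,J}(M/xM)_n$ is a finitely generated $R_0$-module for every $n$.

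The conclusion now follows by a second, \emph{downward}, induction on $n$, whose base case is supplied by Proposition \ref{vanishing}: $H^i_{R_+,J}(M)_n=0$ for all $n$ beyond some threshold. The piece
$$
H^{i-1}_{R_+,J}(M/xM)_{n+1}\longrightarrow H^i_{R_+,J}(M)_n \xrightarrow{\,\cdot x\,} H^i_{R_+,J}(M)_{n+1}
$$
of the long exact sequence shows that the kernel of $\cdot x$ is a quotient of $H^{i-1}_{R_+,J}(M/xM)_{n+1}$, hence finitely generated over $R_0$, while the image of $\cdot x$ sits inside $H^i_{R_+,J}(M)_{n+1}$, which is finitely generated by the downward inductive hypothesis. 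Thus $H^i_{R_+,J}(M)_n$ is an extension of two finitely generated $R_0$-modules and is itself finitely generated.

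I expect the main technical obstacle to be the reduction step: one must verify both that the $(R_+,J)$-torsion submodule of $M$ has vanishing higher $(R_+,J)$-cohomology (which rests on the $R_+$-torsion observation and Remark \ref{basic}) and that $M/xM$ inherits the hypothesis $\Gamma_{R_+,J}(M/xM)_n=0$ for $n\gg 0$; once these are in place, the nested inductions on $i$ and on $n$ fit together by a routine diagram chase using the Noetherianity of $R_0$.
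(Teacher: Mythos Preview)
Your argument is correct and follows essentially the same route as the paper: induction on $i$, reduction to the $(R_+,J)$-torsion-free case, choice of an $M$-regular element $x\in R_+$, verification that $M/xM$ again satisfies the hypothesis via Proposition~\ref{vanishing}, and a downward induction on $n$ using the long exact sequence. Your base case is in fact simpler than the paper's (which shows the whole module $\Gamma_{R_+,J}(M)$ is $R_0$-finite via an $R_+$-filtration), and your reduction step reproves, via Remark~\ref{basic}, the isomorphism $H^i_{R_+,J}(M)\cong H^i_{R_+,J}(M/\Gamma_{R_+,J}(M))$ that the paper obtains directly from \cite[Corollary 1.13]{TYY}; otherwise the two proofs coincide.
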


\begin{proof}
We proceed by induction on $i$. By hypothesis and by the fact that $M_{n}$ for $n<<0$, we conclude that there exists $u\in \mathbb{N}$ such that
$R_{+}^{u}H_{R_+,J}^0(M)=0$. Note that
$R_{+}^{i}H_{R_{+},J}^0(M)/R_{+}^{i+1}H_{R_{+},J}^0(M)$ is a Noetherian $R/R_{+}$-module, that is, a Noetherian $R_{0}$-module for each $i=0,...,u-1$. One then obtains
$H_{R_+,J}^0(M)$ is a finitely generated $R_{0}$-module, and so is $H_{R_+,J}^0(M)_n$ for all $n \in \mathbb{Z}$.

Suppose now $i>0$ and the result is established for smaller values of $i$. Because of the graded isomorphism
$H_{R_+,J}^i(M) \cong H_{R_+,J}^i(M/\Gamma_{R_+,J}(M))$ for $i>0$ (see \cite[Corollary 1.13]{TYY}), we can assume
$M$ is an $(R_+,J)$-torsion-free $R$-module (and so is an $R_+$-torsion-free module). Then there exists an element $x\in R_+$ which is a non-zero divisor on $M$, say $\text{deg}(x)=a$. The exact sequence
$$
0 \rightarrow M \stackrel{x}{\rightarrow} M[a] \rightarrow (M/xM)[a] \rightarrow 0
$$
induces an exact sequence

$$
\begin{array}{r}
 H_{R_{+},J}^{i-1}(M)_{n+a} \rightarrow H_{R_{+},J}^{i-1}(M/xM)_{n+a}
 \rightarrow H_{R_{+},J}^{i}(M)_n \stackrel{x}{\rightarrow} H_{R_{+},J}^{i}(M)_{n+a}. \end{array}
$$
From the above exact sequence and Proposition \ref{vanishing} one can deduce \\
$\Gamma_{R_+,J}(M/xM)_n=0$ for $n$ sufficiently large.
From the inductive hypothesis it then follows that
$H_{R_{+},J}^{i-1}(M/xM)_{q}$ is finitely generated for all $q \in \mathbb{Z}$. Again by Proposition \ref{vanishing}, there exists $s \in \mathbb{Z}$ such that $H_{R_{+},J}^{i-1}(M/xM)_{n}=0$ for all $n\geq s$ and $H_{R_{+},J}^{i}(M)_{n}=0$ for all $n\geq s-a$.

Fix $n\in \mathbb{Z}$ and let $k\geq 0$ be an integer such that $n+ka\geq s-a$. So
$H_{R_{+},J}^{i}(M)_{n+ka}=0$. For each $j=0,..,k-1$, we have the exact sequence
$$
H_{R_{+},J}^{i-1}(M/xM)_{n+(j+1)a}
 \rightarrow H_{R_{+},J}^{i}(M)_{n+ja} \stackrel{x}{\rightarrow} H_{R_{+},J}^{i}(M)_{n+(j+1)a}.
$$
In conclusion, we obtain
$
H_{R_{+},J}^{i}(M)_{n+ja}
$
is a finitely generated for $j=k-1,k-2,...,1,0$, so that
$
H_{R_{+},J}^{i}(M)_{n}
$
is a finitely generated for all $n \in \mathbb{Z}$.

\end{proof}

\begin{thm} \label{finitely generated components}
If $J$ is generated by elements of zero degree, then, for all $i\geq 1$, $H_{R_+,J}^i(M)_{n}$ is a finitely generated $R_{0}$-module for all $n\in \mathbb{Z}$.
\end{thm}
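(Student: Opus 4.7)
The plan is to reduce directly to the preceding lemma by killing the $(R_+,J)$-torsion of $M$. The point is that passing from $M$ to $N := M/\Gamma_{R_+,J}(M)$ does not change the local cohomology $H^{i}_{R_+,J}$ in positive cohomological degree, while it makes the hypothesis of the previous lemma trivially satisfied. Since $R$ is Noetherian and $\Gamma_{R_+,J}(M)$ is a graded submodule of the finitely generated module $M$, the quotient $N$ is again a finitely generated graded $R$-module, so it lies in the class to which the preceding lemma applies.

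Concretely, I would first invoke \cite[Corollary 1.13]{TYY}, which gives $\Gamma_{R_+,J}(N)=0$ and natural isomorphisms
$$
H^{i}_{R_+,J}(M)\;\cong\;H^{i}_{R_+,J}(N)\qquad\text{for all } i\geq 1.
$$
These isomorphisms are homogeneous, so they pass to each graded component: $H^{i}_{R_+,J}(M)_{n}\cong H^{i}_{R_+,J}(N)_{n}$ for all $i\geq 1$ and all $n\in\mathbb{Z}$. Since $\Gamma_{R_+,J}(N)=0$, in particular $\Gamma_{R_+,J}(N)_{n}=0$ for all $n\gg 0$, so the hypothesis of the preceding lemma is fulfilled for $N$ (note $J$ is still generated by elements of degree zero by assumption). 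Applying that lemma to $N$ yields that $H^{i}_{R_+,J}(N)_{n}$ is a finitely generated $R_{0}$-module for every $i\geq 0$ and every $n\in\mathbb{Z}$. Combining with the isomorphism above gives the claim for $M$.

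The real work has already been carried out: Proposition \ref{vanishing} supplies the top-degree vanishing used in the inductive step of the lemma, and the lemma itself handles the induction on $i$ via a homogeneous nonzerodivisor $x\in R_{+}$ of degree $a$, together with Nakayama-type arguments on shifted components. Once these are in place, the step above is essentially formal. The only genuine limitation of this reduction is that the isomorphism $H^{i}_{R_+,J}(M)\cong H^{i}_{R_+,J}(N)$ fails for $i=0$, which is precisely the reason the theorem is stated only for $i\geq 1$.
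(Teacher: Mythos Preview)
Your proposal is correct and is essentially identical to the paper's own proof: both reduce to the preceding lemma by passing to $N=M/\Gamma_{R_+,J}(M)$ via the graded isomorphism $H_{R_+,J}^i(M)\cong H_{R_+,J}^i(N)$ for $i\geq 1$ from \cite[Corollary 1.13]{TYY}, so that $\Gamma_{R_+,J}(N)=0$ trivially fulfills the lemma's hypothesis. Your additional remarks (that $N$ is finitely generated and graded, and that the isomorphism is homogeneous) merely make explicit what the paper leaves implicit.
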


\begin{proof}
It follows from the graded isomorphism $H_{R_+,J}^i(M) \cong H_{R_+,J}^i(M/\Gamma_{R_+,J}(M))$ for $i>0$ and the above lemma.
\end{proof}

\vspace{0.5cm}

Consider the following definition. We say ${\rm Ass}_{R_0}(H_{R_+,J}^i(M)_n)$ is \emph{asymptotically increasing for} $n\to -\infty$, if there exists an $n_0\in \Z$ such that $${\rm Ass}_{R_0}(H_{R_+,J}^i(M)_n)\subseteq{\rm Ass}_{R_0}(H_{R_+,J}^i(M)_{n+1})$$ for all $n\leq n_0.$

\begin{lem}\label{stable} Let $M$ be a finitely generated graded $R$-module and $J$ an arbitrary ideal. Let $i\in \N_0$ be such that $H_{R_+,J}^j(M)_n$ is finitely generated $R_0$-module for all $j<i$ and $n<<0$. Then ${\rm Ass}_{R_0}(H_{R_+,J}^i(M)_n)$ is asymptotically increasing for $n\to -\infty$.
\end{lem}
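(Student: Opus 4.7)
The plan is to adapt Brodmann's method of generic (filter-regular) linear elements. Since $R_0/\mathfrak{m}_0$ is infinite and $R_1$ generates $R_+$ as $R_0$-module, homogeneous prime avoidance applied to the $R_0$-module $R_1$ with the finite set $\{\mathfrak{p} \in \text{Ass}_R(M) : \mathfrak{p} \not\supseteq R_+\}$ produces $x \in R_1$ such that $N := (0 :_M x)$ is $R_+$-torsion. Being a finitely generated $R_+$-torsion module, $N$ has bounded grading and is finitely generated over $R_0$; viewing $N$ as a module over $R/R_+^k$ (for $k$ with $R_+^k N = 0$) and applying the graded Independence Theorem — with $R_+/R_+^k$ nilpotent in $R/R_+^k$, so its local cohomology vanishes in positive degrees — one obtains $H_{R_+,J}^{j}(N)_n = 0$ for all $n \ll 0$ and all $j \geq 0$.

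Splicing $0 \to N \to M \to M/N \to 0$ with $0 \to M/N \to M[1] \to (M/xM)[1] \to 0$ (the latter via $M/N \cong xM \hookrightarrow M[1]$) and taking the long exact sequences in $H_{R_+,J}^{\bullet}$, the vanishing of $H_{R_+,J}^{j}(N)_n$ in low degrees yields, for each $j$ and all $n \ll 0$, the exact sequence
$$
H_{R_+,J}^{j-1}(M/xM)_{n+1} \to H_{R_+,J}^{j}(M)_{n} \xrightarrow{\ x\ } H_{R_+,J}^{j}(M)_{n+1} \to H_{R_+,J}^{j}(M/xM)_{n+1}.
$$
A descending induction on $j$, together with the hypothesis that $H_{R_+,J}^{j}(M)_n$ is finitely generated over $R_0$ for $j < i$ and $n \ll 0$, shows that $H_{R_+,J}^{j}(M/xM)_{n+1}$ is finitely generated over $R_0$ for $j < i$ and $n \ll 0$. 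Consequently, the kernel $K_n$ of $x : H_{R_+,J}^{i}(M)_{n} \to H_{R_+,J}^{i}(M)_{n+1}$, being a homomorphic image of $H_{R_+,J}^{i-1}(M/xM)_{n+1}$, is finitely generated over $R_0$.

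For the associated primes I argue element-wise. Given $\mathfrak{p} \in \text{Ass}_{R_0}(H_{R_+,J}^{i}(M)_{n})$ realized by $\alpha \in H_{R_+,J}^{i}(M)_{n}$, if $R_0 \alpha \cap K_n = 0$ then $x\alpha \neq 0$, $\text{ann}_{R_0}(x\alpha) = \mathfrak{p}$, and $\mathfrak{p} \in \text{Ass}_{R_0}(H_{R_+,J}^{i}(M)_{n+1})$. Otherwise there exists $r \in R_0 \setminus \mathfrak{p}$ with $0 \neq r\alpha \in K_n$, so $\text{ann}_{R_0}(r\alpha) = \mathfrak{p}$ and $\mathfrak{p} \in \text{Ass}_{R_0}(K_n)$. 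This gives the inclusion
$$
\text{Ass}_{R_0}(H_{R_+,J}^{i}(M)_{n}) \subseteq \text{Ass}_{R_0}(K_n) \cup \text{Ass}_{R_0}(H_{R_+,J}^{i}(M)_{n+1}).
$$

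The main obstacle is disposing of the $\text{Ass}_{R_0}(K_n)$ term. My intended approach is to prove $K_n = 0$ for $n \ll 0$, which is equivalent to $x$ acting injectively on $H_{R_+,J}^{i}(M)_{n}$ in that range and forces the desired inclusion. Since the graded submodule $K = \bigoplus_n K_n \subseteq H_{R_+,J}^{i}(M)$ is annihilated by the degree-one element $x$ and each of its components is a finitely generated $R_0$-module (by the previous step), a careful combination of Nakayama-type reasoning with an Artinianess argument in the spirit of Theorem \ref{artianianess} — supplemented, if needed, by induction on $\dim M$ via further superficial elements chosen to be filter-regular on the relevant cohomology modules — should force $K$ to vanish in sufficiently low degrees. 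Executing this asymptotic vanishing cleanly, while only assuming finite generation of $H_{R_+,J}^{j}(M)_n$ for $j<i$ rather than for $j=i$, is the delicate technical core of the proof.
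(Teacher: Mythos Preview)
Your exact sequence and the inclusion
\[
{\rm Ass}_{R_0}\bigl(H_{R_+,J}^{i}(M)_{n}\bigr)\;\subseteq\;{\rm Ass}_{R_0}(K_n)\,\cup\,{\rm Ass}_{R_0}\bigl(H_{R_+,J}^{i}(M)_{n+1}\bigr)
\]
are set up correctly, and using a filter--regular linear element instead of first passing to $M/\Gamma_{R_+,J}(M)$ (as the paper does, so that $x\in R_1$ becomes a genuine non zero--divisor) is a harmless variant. The real gap is your last paragraph: the plan to prove $K_n=0$ for $n\ll 0$ is not merely unexecuted, it aims at a false statement. After the paper's reduction one has $K_n\cong H_{R_+,J}^{i-1}(M/xM)_{n+1}$, and there is no reason for these modules to vanish asymptotically; neither Nakayama, nor Theorem~\ref{artianianess}, nor an induction on $\dim M$ will produce such vanishing. (Incidentally, your claim that $H_{R_+,J}^{j}(M/xM)_{n}$ is finitely generated for all $j<i$ is also unjustified at $j=i-1$, since the next term in the long exact sequence is $H_{R_+,J}^{i}(M)_n$, about which nothing is assumed.)

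The idea you are missing is induction on $i$, which is the spine of the paper's argument. One checks from the long exact sequence that $H_{R_+,J}^{k}(M/xM)_n$ is finitely generated for $k<i-1$ and $n\ll 0$, so the lemma at level $i-1$ applied to $M/xM$ yields that ${\rm Ass}_{R_0}\bigl(H_{R_+,J}^{i-1}(M/xM)_{n}\bigr)$ is asymptotically increasing; the paper records it as eventually equal to a fixed set $\mathcal{A}$. The paper then uses $H_{R_+,J}^{i-1}(M)_{n+1}=0$ for $n\ll 0$ to get an injection $H_{R_+,J}^{i-1}(M/xM)_{n+1}\hookrightarrow H_{R_+,J}^{i}(M)_n$, so that $\mathcal{A}\subseteq{\rm Ass}_{R_0}\bigl(H_{R_+,J}^{i}(M)_n\bigr)$ for every sufficiently small $n$, and in particular for $n+1$. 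Feeding this back into your inclusion collapses the right--hand side to ${\rm Ass}_{R_0}\bigl(H_{R_+,J}^{i}(M)_{n+1}\bigr)$, which is exactly the asymptotic increase. The moral: do not try to kill $K_n$; show instead that ${\rm Ass}_{R_0}(K_n)$ is eventually a constant set already contained in ${\rm Ass}_{R_0}\bigl(H_{R_+,J}^{i}(M)_{n+1}\bigr)$.
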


\begin{proof} We prove by induction on $i$. The case $i=0$ is trivial as $H_{R_+,J}^0(M)_n=0$ for all $n\ll 0$. So, let $i>0$. In view of the natural graded isomorphism $H_{R_+,J}^k(M)\cong H_{R_+,J}^k(M/\Gamma_{R_+,J}(M))$ for all $k\geq 1$ (see \cite[Corollary 1.13 (4)]{TYY}), we may assume $M$ is an $(R_+,J)$-torsion-free $R$-module. Since
$\Gamma_{R_{+}}(M) \subseteq \Gamma_{R_{+},J}(M)$,
we now use \cite[Lemma 2.1.1]{BS} and \cite[Proposition 1.5.12]{BH} to deduce that $R_1$ contains an element $x$ which is a non zero-divisor on $M$. The exact sequence
$0 \rightarrow M(-1) \stackrel{x}{\rightarrow} M \rightarrow M/xM \rightarrow 0$ induces a graded long exact sequence

$$H_{R_+,J}^{k-1}(M) \rightarrow H_{R_+,J}^{k-1}(M/xM) \rightarrow H_{R_+,J}^k(M)(-1) \rightarrow H_{R_+,J}^k(M).$$

\noindent Through this sequence, we obtain
$
H_{R_+,J}^{j-1}(M/xM)
$
is finitely generated for all $j<i$. So, by induction, there exists some $n_1\in \Z$ such that
$$
{\rm Ass}_{R_0}(H_{R_+,J}^{j-1}(M/xM)_n)={\rm Ass}_{R_0}(H_{R_+,J}^{j-1}(M/xM)_{n_1})=:{\mathcal A}\,\,\, { \rm for \ all }\,\, n \leq n_1.
$$
\noindent  Moreover, there exists some $n_2<n_1$ such that $H_{R_+,J}^{i-1}(M)_{n+1}=0$ for all $ n\leq n_2$.  So, for each $n\leq n_2$, we have an exact sequence of $R_0$-modules

$$0 \rightarrow H_{R_+,J}^{i-1}(M/xM)_{n+1} \rightarrow H_{R_+,J}^i(M)_n \rightarrow H_{R_+,J}^i(M)_{n+1},$$
\noindent induced by the above exact sequence. This shows that
$$
{\mathcal A}\subseteq {\rm Ass}_{R_0}(H_{R_+,J}^i(M)_n)\subseteq {\mathcal A}\cup  {\rm Ass}_{R_0}(H_{R_+,J}^i(M)_{n+1}) , \ \text{for all}  \ n \leq n_2.
$$
One can then conclude that
$$
{\rm Ass}_{R_0}(H_{R_+,J}^i(M)_n)\subseteq {\rm Ass}_{R_0}(H_{R_+,J}^i(M)_{n+1}) , \ \text{for all} \ n < n_2.
$$
\end{proof}

\begin{thm} \label{ass_stable} Let $M$ be a finitely generated graded $R$-module, $J$ an ideal of $R$ generated by elements of zero degree, and let $i\in \N$ be such that $H_{R_+,J}^j(M)_n$ is finitely generated $R_0$-module for all $j<i$ and $n<<0$. Then
there exists a finite subset $X$ of $\emph{Spec}(R_0)$ such that ${\rm Ass}_{R_0}(H_{R_+,J}^i(M)_n)=X$ for $n<<0$.
\end{thm}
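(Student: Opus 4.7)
The plan is to combine the asymptotic increase supplied by Lemma \ref{stable} with the finite generation of graded components supplied by Theorem \ref{finitely generated components}. Set $A_n := {\rm Ass}_{R_0}(H_{R_+,J}^i(M)_n)$; the task reduces to showing that $(A_n)_n$ eventually becomes constant and takes value in a finite set.

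First I would note that the hypothesis of the Theorem matches the hypothesis of Lemma \ref{stable} verbatim (the extra assumption that $J$ is generated by zero-degree elements is not used by that lemma). Applying Lemma \ref{stable} yields an integer $n_0$ such that $A_n \subseteq A_{n+1}$ for every $n \leq n_0$. Iterating this inclusion produces $A_n \subseteq A_{n_0}$ for every $n \leq n_0$, so the sequence $(A_n)_{n \leq n_0}$, ordered by decreasing $n$, forms a descending chain of subsets of the single set $A_{n_0}$.

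Next I would invoke Theorem \ref{finitely generated components}: since $J$ is generated by elements of degree zero and $i \geq 1$, the $R_0$-module $H_{R_+,J}^i(M)_{n_0}$ is finitely generated. Hence $A_{n_0}$ is a finite subset of ${\rm Spec}(R_0)$. A descending chain of subsets of a finite set must stabilize, giving some $n_3 \leq n_0$ with $A_n = A_{n_3}$ for all $n \leq n_3$. Taking $X := A_{n_3}$ closes the argument.

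I do not expect any real obstacle here: the proof is essentially a synthesis of Lemma \ref{stable} and Theorem \ref{finitely generated components}. The only delicate point is the finiteness of $A_{n_0}$, without which the descending chain would have no reason to terminate—and that finiteness is precisely what the zero-degree hypothesis on $J$ contributes via Theorem \ref{finitely generated components}.
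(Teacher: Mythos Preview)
Your proof is correct and follows exactly the paper's approach: the paper's proof is the single sentence ``This is a consequence of Lemma \ref{stable} and Theorem \ref{finitely generated components}, once ${\rm Ass}_{R_0}(H_{R_+,J}^i(M)_n)$ is finite for all integer $n$,'' and you have simply written out the details of how those two results combine.
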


\begin{proof}
This is a consequence of Lemma \ref{stable} and Theorem \ref{finitely generated components}, once \\ ${\rm Ass}_{R_0}(H_{R_+,J}^i(M)_n)$ is finite for all integer $n$.
\end{proof}

If the sequence $\{{\rm Ass}_{R_0}(H_{R_+,J}^i(M))_n\}_{n\in \Z}$ satisfies the assertion in Theorem \ref{ass_stable}, we say it is \emph{asymptotically stable for} $n \to -\infty$.

\vspace{0.2cm}

 \noindent As an immediate consequence of Corollary \ref{cor8}, Lemma \ref{stable} and Theorem \ref{ass_stable} we get the following result.

 \begin{cor} Let $M$ be a finitely generated graded $R$-module.
\begin{enumerate}
\item[\emph{(i)}] If ${\rm depth}(R_+,J,M)=\dim M/({\mathfrak m_0}R+J)M$ then, for all $i\geq 0$, \\ $\{{\rm Ass}_{R_0}(H_{R_+,J}^i(M))_n\}_{n\in \Z}$ is asymptotically stable for $n\to -\infty$;

\item[\emph{(ii)}] If $t={\rm depth}(R_+,J,M)$, then $\{{\rm Ass}_{R_0}(H_{R_+,J}^t(M))_n\}_{n\in \Z}$ is asymptotically stable for $n\to -\infty$.
\end{enumerate}
\end{cor}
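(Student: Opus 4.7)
The plan is to reduce both items directly to Theorem \ref{ass_stable}, whose hypothesis ``$H_{R_+,J}^j(M)_n$ finitely generated $R_0$-module for all $j<i$ and $n\ll 0$'' is automatic in any situation where the lower cohomologies vanish outright. So the task is simply to check, in each case, that the relevant lower cohomologies are zero.

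For (ii), set $t = {\rm depth}(R_+,J,M)$. By Remark \ref{remark8}, $t = \inf\{j\in\N_0 : H_{R_+,J}^j(M)\neq 0\}$, so $H_{R_+,J}^j(M) = 0$ for every $j < t$; in particular each $H_{R_+,J}^j(M)_n$ is the zero $R_0$-module, which is trivially finitely generated. Theorem \ref{ass_stable} applied at $i=t$ then yields a finite set $X\subseteq{\rm Spec}(R_0)$ with ${\rm Ass}_{R_0}(H_{R_+,J}^t(M)_n) = X$ for all $n\ll 0$, which is precisely asymptotic stability for $n\to -\infty$.

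For (i), by Corollary \ref{cor8} the hypothesis ${\rm depth}(R_+,J,M) = \dim M/(\mathfrak m_0 R+J)M$ forces $H_{R_+,J}^i(M) = 0$ for all $i$ except one, and that unique nonvanishing index must be $t = {\rm depth}(R_+,J,M)$. For $i=t$ we invoke (ii). For every other $i$, the $R_0$-module $H_{R_+,J}^i(M)_n$ vanishes for every $n\in\Z$, hence ${\rm Ass}_{R_0}(H_{R_+,J}^i(M)_n) = \emptyset$ for every $n$; taking $X = \emptyset$ in the definition shows this constant-empty sequence is itself asymptotically stable.

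I do not foresee any real obstacle, since the statement is advertised as ``an immediate consequence'' of the three cited results. The only subtle point worth flagging in writing is the interpretation of asymptotic stability when the relevant cohomology module is identically zero: one must observe that the empty associated-prime set qualifies as a legitimate finite $X$ in the sense of Theorem \ref{ass_stable}, so that the asymptotic-stability convention is consistent with the vanishing case. Once this is noted, the two items follow by the bookkeeping above.
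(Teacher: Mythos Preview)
Your proposal is correct and matches the paper's approach, which records the corollary as an immediate consequence of Corollary~\ref{cor8}, Lemma~\ref{stable}, and Theorem~\ref{ass_stable} without further detail; your substitution of Remark~\ref{remark8} for the direct citation of Lemma~\ref{stable} is immaterial since vanishing of the lower cohomologies trivially fulfills the finiteness hypothesis of Theorem~\ref{ass_stable}. The one cosmetic point you might add is the edge case $t=0$, where Theorem~\ref{ass_stable} is stated only for $i\in\N$, but then $H^0_{R_+,J}(M)_n\subseteq M_n=0$ for $n\ll 0$ gives the stability directly.
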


Last Corollary, item $\text{(ii)}$, was showed in \cite[Proposition 5.6]{BH} for the case $J=0.$

\textbf{Department of Mathematics, Institute of Mathematics and Computer Science, ICMC, University of S\~{a}o Paulo, BRAZIL.
}

\emph{E-mail address}: apoliano27@gmail.com

\hspace{1cm}

\textbf{Department of Mathematics, Institute of Mathematics and Computer Science, ICMC, University of S\~{a}o Paulo, BRAZIL.
}

\emph{E-mail address}: vhjperez@icmc.usp.br

\hspace{1cm}
\end{document}